\let\mathscr\mathcal
\newtheorem*{remark}{Remark}
\newcommand{\supnorm}[1]{\left\lVert #1 \right\rVert_{\infty}}
\newcommand{\pnorm}[2]{\left\lVert #2 \right\rVert_{#1}}
\newtheorem{theorem}{Theorem}[section]
\newtheorem{corollary}{Corollary}[theorem]
\newtheorem{lemma}[theorem]{Lemma}
\title{$L^2$ Stability of Simple Shocks for Spatially Heterogeneous Conservation Laws}
\author{
	Shyam Sundar Ghoshal\thanks{Centre for Applicable Mathematics, Tata Institute of Fundamental Research, e-mail: ghoshal@tifrbng.res.in, venkatesh2020@tifrbng.res.in (corresponding author)} \and
	Parasuram Venkatesh\footnotemark[1]
}
\date{}
\begin{document}
\maketitle
\begin{abstract}
	In this paper, we consider scalar conservation laws with smoothly varying spatially heterogeneous flux that is convex in the conserved variable. We identify structural assumptions under which a single shock wave connecting two constant states emerges in finite time for all $L^{\infty}$ initial data satisfying the same far-field conditions. Under a further condition on the mixed partial derivative of the flux, we establish the $L^2$-stability of these simple shock profiles: perturbations of the Riemann initial data yield solutions whose $L^2$ distance from the corresponding simple shock wave is non-increasing in time, up to a time-dependent spatial shift. We further show that these conditions are sharp: we construct explicit counterexamples demonstrating that, in particular, the existence of a contractive shift function holds only under our assumptions and fails otherwise. The main tools we use are Dafermos' generalised characteristics for the evolution analysis and the relative entropy method for stability.
		
	\smallskip
		
	\noindent\textbf{Keywords:} scalar conservation laws, Hamilton--Jacobi equations, stability, relative entropy, generalised characteristics.
		
	\smallskip
		
	\noindent\textbf{MSC2020:} 35L65, 35L67 (Primary), 35A15 (Secondary)
\end{abstract}
\tableofcontents
\section{Introduction}
Consider the spatially heterogeneous scalar conservation law
\begin{equation}\label{claw}
	\begin{split}
		u_t+f(x,u)_x&=0, \\
		u(x,0)&=u_0(x).
	\end{split}
\end{equation}
Laws of this form arise as simple models of physical phenomena such as traffic flow on a road with smoothly varying maximal velocity \cite{thesis}. The heterogeneous ($x$-dependent) case is quite distinct in certain qualitative aspects from the homogeneous ($x$-independent) one. This has been of some recent interest, as seen in the work of Colombo, Perrollaz, and Sylla \cite{ConLawHJB}, which presents a mathematical framework for proving the global well-posedness of scalar conservation laws and Hamilton–Jacobi equations involving space-dependent variables using the vanishing viscosity and compensated compactness methods. The standard maximum principle fails for \eqref{claw}, so  the authors use a Bernstein method and a specially constructed family of stationary solutions to obtain uniform bounds in $L^{\infty}$ on viscous approximations. Existence of the limit without the $\operatorname{BV}$ bounds used in Kružkov's approach is guaranteed by compensated compactness. Crucially here the flux is required to be ``weakly genuinely non-linear'', i.e. the set of $u$ where $f_{uu}(x,\cdot)$ vanishes must have empty interior for almost every $x\in\mathbb{R}$.

Unlike the older theory of scalar conservation laws with discontinuous convex flux \cite{discofluxog}, the spatial heterogeneity in this article is assumed to be smooth.

In the case of spatially homogeneous and convex flux, characterisation of the entropy solution for data satisfying far-field conditions
\begin{equation}\label{sharefaction}
	u_0(x)=
	\begin{cases}
		u_{-}&\text{ if }x<x_{-}, \\
		u_{+}&\text{ if }x>x_{+}.
	\end{cases}
\end{equation}
was started in \cite{HomoShock} for the case of $u_->u_+$ with strictly convex flux, and completed in \cite{structuretheorem} for the general case.

In particular, Dafermos and Shearer \cite{HomoShock} proved that for initial data of the form \eqref{sharefaction}, a homogeneous $(f_x\equiv0)$ and strictly convex flux, and $u_{-}>u_{+}$, the entropy solution will reduce, in finite time, to a simple shock travelling at Rankine-Hugoniot speed. Surprisingly, the specific behaviour of $u_0$ in $[x_{-},x_{+}]$ is completely irrelevant except perhaps to delay the time and change the position from where the terminal shock forms. Adimurthi, Ghoshal, and Gowda \cite{structuretheorem} considered the other case, i.e. $u_{-}\leq u_{+}$ (allowing also for equality). The expectation is that the asymptotic shock in \cite{HomoShock} should now be an asymptotic rarefaction instead -- technically, one obtains convergence to the generalisation of an `N-wave' \cite[3.4.5b, pp. 157-161]{evans} with more intricate features. Unlike the shock case, solutions can demonstrate more non-trivial behaviours; the structure of entropy solutions is characterised primarily in terms of `Asymptotic Single Shock Packets' or ASSPs in the terminology of Adimurthi-Ghoshal-Gowda \cite{structuretheorem}, which are cylindrical and possibly tilted infinite portions of the space-time half-plane $\mathbb{R}^2_{+}$ within which all characteristics asymptotically merge into a single shock. The `speeds' of such cylinders are strictly increasing, from left to right, so they do not interact with each other. A fine analysis of backward characteristics plays a crucial role in the structure theorem. Additionally, they demonstrate that smooth ($C^{\infty}$) initial data can give rise to infinitely many such ASSPs.

In the first part of this paper, we tackle the first part of this characterisation program, i.e., the case of $u_->u_+$. In particular, we establish a finite-time emergence result in the vein of Dafermos and Shearer \cite{HomoShock} for a large class of initial data, demonstrating that all such data evolve into the same shock wave. This is the content of Theorem~\ref{SingleShock}. 

In the next part of the paper, we show that this shock profile generated by data of the form
\[
u_0(x)=
\begin{cases}
	u_{-}&\text{ if }x<0, \\
	u_{+}&\text{ if }x>0,
\end{cases}
\]
is stable with respect to $L^2$ perturbations. Thus, we recover the results of Leger \cite{LegerShock} in Theorem~\ref{L2 stability} for the heterogeneous case.

In his seminal paper, Kružkov \cite{Kruzkov} proved the well-posedness of the Cauchy problem; in particular, the unique `entropy' weak solutions of \eqref{claw} were shown to be characterised as those satisfying a further inequality
\begin{equation}\label{Het entropy}
	\eta(u)_t+Q(x,u)_x+\eta^{\prime}(u)f_x(x,u)-Q_x(x,u)\leq0,
\end{equation}
in the sense of distributions over $\mathscr{D}(\mathbb{R}\times[0,\infty))$ for all pairs of functions $\eta,Q$ such that the `entropy' $\eta$ is a convex function and the `entropy flux' $Q(x,\cdot)$ is an antiderivative of $\eta^{\prime}(\cdot)f_u(x,\cdot)$ for each fixed $x$. Note that given $\eta$, we can always construct $Q$ that satisfies this requirement.

For each such entropy-entropy flux pair $\eta,Q$ and smooth, non-negative $\varphi(x,t)$ compactly supported in $\mathbb{R}\times[0,\infty)$, an entropy-admissible weak solution must be such that
\[
\iint\eta(u)\varphi_t+Q(x,u)\varphi_x dxdt+\int\eta(u_0)\varphi(x,0)dx\geq\iint\varphi \left(\eta^{\prime}(u)f_x(x,u)-Q_x(x,u)\right)dxdt.
\]
Just as in the homogeneous case, this is motivated by the `vanishing viscosity' argument \cite{Kruzkov}. Important qualitative properties of solutions, such as the existence of boundary traces, are not directly established by the method, but when we consider non-degenerate fluxes, entropy solutions to \eqref{claw} have been shown to possess strong traces \cite{trace}. For convex fluxes in one spatial dimension, however, the notion of boundary trace can be more naturally obtained through the correspondence between scalar conservation laws and the Hamilton-Jacobi equations given by
\begin{equation}\label{HJ}
	\begin{split}
		v_t+f(x,v_x)&=0, \\
		v(x,0)&=v_0(x),
	\end{split}
\end{equation}
where the initial data $v_0$ is Lipschitz. For instance, Caselles \cite{correspondence} shows that this correspondence is exact, i.e. $v$ is a viscosity solution of \eqref{HJ} if and only if $u=v_x$ is an entropy solution of \eqref{claw}, when the flux has a stationary point at $0$. Formally differentiating the equation yields an $L^{\infty}$ solution $u=v_x$ of \eqref{claw}, with $L^{\infty}$ initial data $u_0=(v_0)_x$. This intuition is easily formalised in the homogeneous case \cite[pp. 143--148]{evans}, and a similar, albeit tedious, line of argument justifies this correspondence in the heterogeneous case as well \cite{thesis}. Since initial data for \eqref{claw} are typically $L^{\infty}$, it is natural to study Cauchy problems for \eqref{HJ} with Lipschitz initial data for the correspondence.

\subsection{Setting of the problems}
We assume that the flux $f:\mathbb{R}^2\to\mathbb{R}$ satisfies the following conditions:
\begin{flalign}
	\label{S}\tag{S}&\mbox{\textbf{Stationarity:}}\text{ for some }u^\pm \in \mathbb{R}, \ u^+ < u^- : f_x(\cdot, u^\pm) \equiv 0.&& \\
	\label{UC}\tag{UC}&\mbox{\textbf{Uniform Convexity:}}\text{ }f_{uu}\geq\alpha>0. \\
	\label{C2}\tag{$C^2$}&\mbox{\textbf{Smoothness:}}\text{ }f\in C^2(\mathbb{R}^2;\mathbb{R}). \\
	\label{Nag}\tag{N}&\mbox{\textbf{Nagumo Growth:}}\text{ there exists }N(\cdot)\in C([0,\infty);\mathbb{R}),\lim_{y\to\infty}\frac{N(y)}{y}=\infty\text{ such that } \\
	\notag&\text{for all } u\in\mathbb{R}:\abs{f(x,u)}\geq N\left(\abs{u}\right). \\
	\label{FSP}\tag{FSP}&\mbox{\textbf{Finite Speed of Propagation:}}\text{ }\theta(v)=\sup_{x\in\mathbb{R}}\abs{f_u(x,v)}\in C(\mathbb{R}).
\end{flalign}
If the flux satisfies these assumptions, then both \eqref{claw} and \eqref{HJ} are well-posed for $L^{\infty}$ and Lipschitz initial data, respectively, and moreover, the solutions correspond in the sense described above. In general, the correspondence holds even without \ref{S}, and \ref{UC} can be relaxed to strict convexity (see appendix~\ref{app}). The condition \eqref{Nag} is standard for the Hamilton-Jacobi equation (see \cite{CSHJB}) and ensures that the Legendre transform of the flux is uniformly well-defined. The role of \eqref{Nag} can be seen more explicitly in the discussion of well-posedness in the Appendix~\ref{app}, where the Legendre transform is also defined.

The results of Theorem~\ref{SingleShock} hold under these assumptions. For Theorem~\ref{L2 stability}, we require one further assumption on the heterogeneity of the flux (which is trivially satisfied for the homogeneous case), namely:
\begin{flalign}
	\label{P}\tag{P}\mbox{\textbf{Positive Heterogeneity:}}\text{ for all }y,z\in\mathbb{R}:f_{xu}(y,z)\geq0.&&
\end{flalign}
For an example of a flux that satisfies the assumptions {\eqref{S}-\eqref{FSP}}, consider an adaptation of the Lighthill-Whitham-Richards (LWR) model of traffic flow \cite{lwr}, in which the maximum velocity $V$ varies with $x$ and $V\in C^2(\mathbb{R})\cap L^{\infty}(\mathbb{R}),V\geq\alpha>0$. The conserved scalar in this case is the car density $u$ represented as a fraction of the maximum, and the flux given by
\[
f(x,u)=-V(x)u(1-u),
\]
clearly satisfies all the assumptions \ref{S}-\ref{UC}-\ref{C2}-\ref{Nag}-\ref{FSP} with $u_{+}=0,u_{-}=1$. This example is adapted from \cite{thesis}.

In the traffic flow model, of course, the flux is concave, not convex, i.e. {the flux is} $-f$ rather than $f$, but this only amounts to a change of direction from $x$ to $-x$ and does not substantially affect the analysis. Now suppose $f$ is as above and initial data $u_0(x)=H(-x)$, where $H$ is the Heaviside function, i.e.,
\[
H(x)=
\begin{cases}
	0&\text{ if }x<0, \\
	1&\text{ if }x>0.
\end{cases}
\]
Then, by \ref{S} and \ref{UC} the entropy solution to the Cauchy problem \eqref{claw} is the simple shock $u(x,t)=H(-x)$, as if this were a homogeneous conservation law.

The motivation behind this model is that traffic density ranges from a minimum of zero to some maximum, and the velocity of a car in the road varies inversely with it. At the maximum density, cars slow to a halt and the velocity, hence the flux, must be zero. Similarly, the flux must be zero when the density is as well, which motivates the concave shape of the flux function $f$. Under this interpretation, the equation is thus akin to the conservation of mass. Allowing the maximum velocity to vary with $x$ allows us to model heterogeneous road conditions.

This phenomenon generalises as long as the flux satisfies \ref{S}. Let $f(u_{\pm})$ denote the constant values that $f(\cdot,u_{\pm})$ attain respectively. For initial data $u_0(x)\equiv u_{\pm}$, it is clear that $u(x,t)\equiv u_{\pm}$ is the unique entropy solution. Given the spatial heterogeneity of $f$, these are, in general, the only constant admissible solutions, in contrast with the homogeneous case where all constant functions on the $(x,t)$ upper half plane are entropy solutions of \eqref{claw}. Thus, for Riemann-type initial data of the form
\begin{equation}\label{Riemann}
	\Phi(x)=
	\begin{cases}
		u_{-}&\text{ if }x<0, \\
		u_{+}&\text{ if }x>0,
	\end{cases}
\end{equation}
where $u_{-}>u_{+}$, the entropy solution to \eqref{claw} is given by $u(x,t)=\Phi(x-\sigma t)$, where $\sigma$ is the Rankine-Hugoniot shock speed given by
\begin{equation}\label{RH}
	\sigma=\dfrac{f(u_{-})-f(u_{+})}{u_{-}-u_{+}}.
\end{equation}
Here $f(u_{\pm})$ denote the constant values that $f$ takes at $u_{\pm}$.

Apart from models like LWR for traffic flow, one major motivation for studying scalar conservation laws is the hope of discovering methods that generalise to systems. For instance, the idea of generalised characteristics \cite{GenCharSys} or the theory of $L^2$ stability of shock waves for hyperbolic systems (the method of `a-contraction with shifts') as pioneered by Golding, Krupa, and Vasseur \cite{sharpL2contra}.

The latter has been fruitfully extended to improve uniqueness results for the general Cauchy problem \cite{bvuni}. Similar extensions for heterogeneous systems have not yet been pursued to the same extent, but the hope is that such a program can be carried out.

Typically, the methods do not carry over completely and with full power, but often serve to motivate. For instance, the theory of contraction up to shift for scalar conservation laws as in \cite{LegerShock} only holds for specific systems, and the method must be modified by weighting the relative entropy term to remain valid for systems. Even then, the contraction property may not {hold} for non-extremal shocks. Some sharp criteria for the existence of contractive shifts are presented in \cite{criteriacontra}.
\begin{remark}
	In place of {\ref{S}}-\ref{FSP}, the correspondence theory in \cite{ConLawHJB} instead assumes
	\begin{flalign}
		\label{CNH}\tag{CNH}\mbox{\textbf{Compact Non-Homogeneity:}}\text{ there exists }A>0:\forall\abs{x}\geq A:f_x(x,\cdot)\equiv0.&&
	\end{flalign}
	From \ref{CNH}, the properties \ref{Nag} and \ref{FSP} can easily be derived. Here, we assume \ref{Nag} directly to obtain the existence and boundedness of minimising arcs for the Hamilton-Jacobi equation in its variational form \cite{CSHJB}, and \ref{FSP} so that we can work locally. {This establishes the existence of viscosity solutions to \eqref{HJ}, and thus we can conclude that entropy solutions to \eqref{claw} exist.}
\end{remark}
\begin{remark}
	As we shall see in the proof, $L^2$ is special among the $L^p$ spaces since the norm is \textit{uniformly} convex, i.e., $f(u)=u^2$ is such that $f^{\prime\prime}=1$, whereas for $p>2$ and $g(u)=\abs{u}^p$, we have that $g^{\prime\prime}(u)=p(p-1)\abs{u}^{p-2}$, which vanishes at zero.
	
	If $p<2$, then since we are dealing with $L^{\infty}$ initial data, any $L^p$ perturbation is, by interpolation, also an $L^2$ perturbation. Thus, we only restrict ourselves to the $L^2$ case. Even for homogeneous scalar conservation laws, similar results for $p\neq2$ require further restrictions on the initial perturbations \cite{lp}.
\end{remark}
\begin{remark}\label{band remark}
	We require \ref{P} to cast the entropy inequality \eqref{Het entropy} in divergence form, as we shall see later. However, \ref{P} along with \ref{S} is more restrictive than it appears; in particular it implies that $f_x(\cdot,c)\equiv0$ for all $c\in[u_{+},u_{-}]$, not just the endpoints. Thus, the flux is effectively homogeneous in this band of values; this observation plays a crucial role in the proof of Lemma~\ref{lambda}. However, refer to section~\ref{neg het} to see why \eqref{P} is quite sharp with respect to Theorem~\ref{L2 stability}. Thus, we do not expect that it can be relaxed.
\end{remark}

\subsection{Example of an admissible flux}\label{ex}
Let us construct an explicit example of a flux satisfying all the stipulated conditions, including \eqref{P}. The heterogeneous LWR flux is presented to motivate the physical relevance of spatial heterogeneity. However, it does not satisfy \eqref{P}. To see an example of a flux that does, we present the following construction. Let $h, g \in C^2(\mathbb{R})$ be uniformly convex with common convexity constant $\alpha > 0$ such that $h(0)=g(0)$ and $h^{\prime}(u)\geq g^{\prime}(u)$, with equality holding if $u\in[0,1]$ but $h\neq g$ in general. Now, we define a spatially heterogeneous flux using $h,g$. Let $\varphi(x)$ be a monotone increasing, smooth function with $0\leq\varphi\leq1$, e.g. a regularisation of the Heaviside function $H$ by convolution with a symmetric mollifier. Then define
\[
f(x,u)=\varphi(x)h(u)+(1-\varphi(x))g(u).
\]
Then, since $f\in C^2(\mathbb{R}^2)$,
\begin{align*}
	f_{xu}(x,u)&=\pdv{}{x}\left(\varphi(x)(h^{\prime}(u)-g^{\prime}(u))+g^{\prime}(u)\right) \\
	&=\varphi^{\prime}(x)(h^{\prime}(u)-g^{\prime}(u)) \\
	&\geq0,
\end{align*}
by our hypotheses on $h,g,\varphi$. The other conditions are easily verified.
\begin{itemize}
	\item \eqref{S}: since $h(0)=g(0)=0$, and by construction $h^{\prime}=g^{\prime}$ on the interval $[0,1]$, we have that $h(1)=g(1)$ as well, and stationarity holds with $u_+=0,u_-=1$.
	\item \eqref{UC}: we can explicitly compute $f_{uu}(x,u)=\varphi(x)h^{\prime\prime}(u)+(1-\varphi(x))g^{\prime\prime}(u)\geq\alpha$.
	\item \eqref{C2}: since $\varphi,h,g$ are smooth, $f$ is trivially $C^2$.
	\item \eqref{Nag}: by uniform convexity, $h,g$ satisfy Nagumo growth, say with functions $N_h,N_g$ respectively. Then, for $N(y)=\min\{N_h(y),N_g(y)\}$, we have that $N$ is a Nagumo function for $f$.
	\item \eqref{FSP}: again we explicitly compute
	\begin{align*}
		\theta(v)&=\sup_x\abs{f_u(x,v)} \\[1.5ex]
		&=\sup_x\abs{\varphi(x)h^{\prime}(v)+(1-\varphi(x))g^{\prime}(v)} \\[1.5ex]
		&=\max\left\{\abs{h^{\prime}(v)},\abs{g^{\prime}(v)}\right\}.
	\end{align*}
	This is a continuous function in $v$.
\end{itemize}
For an explicit example, let $a(u)=u$, and $b(u)=\eta_{\epsilon}\ast\tilde{b}(u)$, where $\eta_{\epsilon}$ is a standard symmetric mollifier supported in $[-\epsilon,\epsilon]$ with $0<\epsilon<1/4$, and
\[
\tilde{b}(u)=
\begin{cases}
	-\dfrac{1}{2}+\dfrac{3}{2}\left(u+\dfrac{1}{2}\right)&\text{ if }u<-\dfrac{1}{2}, \\
	u&\text{ if }-\dfrac{1}{2}\leq u<\dfrac{3}{2}, \\
	\dfrac{3}{2}+\dfrac{2}{3}\left(u-\dfrac{3}{2}\right)&\text{ if }u\geq\dfrac{3}{2}.
\end{cases}
\]
Clearly $a,b$ are both monotonically increasing, smooth functions with $a\geq b$, with equality holding at least on the interval $[0,1]$. Hence, if we define
\[
h(u)=\int_{0}^{u}a(y)dy\text{ and }g(u)=\int_{0}^{u}b(y)dy,
\]
then $h,g$ satisfy all our requirements.

\section{Main Results}\label{MR}
First, we consider initial data that satisfy the following far-field condition as in \cite{HomoShock}: suppose there exist real numbers $x_{\pm}\in\mathbb{R}, x_{-}<x_{+}$ such that:
\begin{equation}\label{far field conditions}
	u_0(x)=
	\begin{cases}
		u_{-}&\text{ if }x<x_{-}, \\
		u_{+}&\text{ if }x>x_{+}.
	\end{cases}
\end{equation}
For such initial data, we have the following result for entropy solutions.
\begin{theorem}\label{SingleShock}
	\textbf{(Finite-time emergence of a simple shock)} Let $f$ satisfy the assumptions \ref{S}-\ref{FSP}, and let $u(x,t)$ be the entropy solution of the Cauchy problem \eqref{claw} with initial data $u_0$ satisfying far-field conditions \eqref{far field conditions}. Then, for some $X\in\mathbb{R}$ and finite $T>0$, we have that for all $t>T$:
	\[
	u(x,t)=
	\begin{cases}
		u_{-}&\text{ if }x<X+\sigma(t-T), \\
		u_{+}&\text{ if }x>X+\sigma(t-T).
	\end{cases}
	\]
	Here $\sigma$ is the shock speed coming from the Rankine-Hugoniot condition \eqref{RH}.
\end{theorem}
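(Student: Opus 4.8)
The plan is to combine Kruzkov's comparison principle with Dafermos' generalised characteristics, extracting a quantitative speed gap from \ref{UC} together with the stationarity \ref{S}. First I would record the two consequences of \ref{S} that drive everything. By \ref{S} the functions $x\mapsto f(x,u_{\pm})$ are constant, so the pure states $u\equiv u_{\pm}$ are exact stationary solutions of \eqref{claw}, and along any genuine characteristic solving \eqref{char ode} the companion value $z$ is preserved once it equals $u_{\pm}$ (since $z\equiv u_{\pm}$ solves $\dot z=-f_x(y,z)\equiv 0$ and solutions of \eqref{char ode} are unique). Consequently the Riemann data \eqref{Riemann} translated to any base point produce the exact travelling shock moving at the constant Rankine--Hugoniot speed $\sigma$ of \eqref{RH}. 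Sandwiching a given $u_0\in[u_+,u_-]$ between the two Riemann profiles jumping at $x_+$ and at $x_-$ and invoking the comparison principle then confines the transition: $u(x,t)=u_-$ for $x<x_-+\sigma t$ and $u(x,t)=u_+$ for $x>x_++\sigma t$, so the non-constant part lives inside a strip of fixed width $x_+-x_-$ travelling at speed $\sigma$.

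Next, in the special case $u_0\in[u_+,u_-]$, I would produce the speed gap. From \ref{UC} and a second-order Taylor expansion at $u_{\pm}$, together with $f(\cdot,u_{\pm})\equiv f(u_{\pm})$ from \ref{S}, one obtains the uniform bounds $f_u(y,u_-)\ge\sigma+\delta$ and $f_u(y,u_+)\le\sigma-\delta$ for every $y\in\mathbb{R}$, with $\delta=\tfrac{\alpha}{2}(u_--u_+)>0$. Let $Y_-$ be the genuine characteristic issuing from $(x_-,0)$ and $Y_+$ that from $(x_+,0)$; by the preservation property these carry the constant values $u_-$ and $u_+$, so $\dot Y_-\ge\sigma+\delta$ and $\dot Y_+\le\sigma-\delta$, whence $Y_-(t)\ge x_-+(\sigma+\delta)t$ and $Y_+(t)\le x_++(\sigma-\delta)t$. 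Comparing with the confining strip, $Y_-$ would have to exit it on the right, and symmetrically $Y_+$ on the left, by the explicit time $T^{\ast}=(x_+-x_-)/\delta$; since a genuine characteristic cannot cross a shock, each must instead be absorbed by $T^{\ast}$. Using that genuine characteristics meet only at endpoints (so every intermediate backward characteristic is trapped between $Y_-$ and $Y_+$), together with the preservation of $u_{\pm}$ and the Rankine--Hugoniot condition, I would conclude that for $t>T^{\ast}$ the profile is exactly the single travelling shock, which identifies the constants $X$ and $T$ in the statement.

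Finally, for general $u_0\in L^{\infty}$ satisfying \eqref{far field conditions} I would reduce to the special case by showing that any overshoot above $u_-$ (or undershoot below $u_+$) is eliminated in finite time, after which the semigroup property of \eqref{claw} and the special case complete the argument. By \ref{FSP} and the far-field conditions the set $\{x:u(\cdot,t)>u_-\}$ stays bounded and opens on its left into a rarefaction whose trailing edge carries the stationary value $u_-$; that edge therefore travels with speed $f_u(\cdot,u_-)\ge\sigma+\delta$, strictly exceeding the speed of the adjoining shock, so the shock consumes the rarefaction down to $u_-$ in finite time, and symmetrically below $u_+$.

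I expect the main obstacle to be precisely this generalised-characteristic bookkeeping: upgrading the finite-time absorption of the extremal characteristics $Y_{\pm}$ to the assertion that \emph{no} intermediate value survives and that a \emph{single} shock remains rather than several, and making the overshoot elimination quantitative and uniform in the heterogeneous setting, where the competing speeds $f_u(\cdot,\cdot)$ vary with $x$ and the characteristic ODEs \eqref{char ode} no longer admit straight-line solutions. This is the step I would isolate as the core lemma and attack directly, since the comparison argument only confines the transition to a strip and does not by itself force it to collapse.
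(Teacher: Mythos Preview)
Your sandwich via the comparison principle and the speed gap $f_u(\cdot,u_\pm)\lessgtr\sigma\mp\delta$ are both correct and useful, but the argument built on them has a real gap, and it is exactly the one you flag at the end. The object you call $Y_-$ --- a \emph{genuine} forward characteristic from $(x_-,0)$ carrying the value $u_-$ --- need not exist: if $u_0(x_-+)<u_-$ (the generic situation under \eqref{far field conditions}), the unique forward characteristic from $(x_-,0)$ is a shock from the outset. Your bound $\dot Y_-\ge\sigma+\delta$ only holds along genuine characteristics; once the curve is a shock with left state $u_-$ and right state $u_r\in(u_+,u_-)$, its Rankine--Hugoniot speed is the secant slope $\tfrac{f(u_-)-f(\cdot,u_r)}{u_--u_r}$, which by convexity lies in $(\sigma,f_u(\cdot,u_-))$ and can be arbitrarily close to $\sigma$ when $u_r$ is close to $u_+$. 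So your sandwich confines the transition to a strip of fixed width $x_+-x_-$ moving at speed $\sigma$, but your mechanism does not force that width to shrink. The implication ``$Y_\pm$ absorbed $\Rightarrow$ single shock'' is therefore not established, and that implication is essentially the whole theorem.

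The paper sidesteps this by a different reduction. Rather than sandwiching between two Riemann profiles via Kruzkov, it compares $u$ with the solution $\overline u$ arising from a \emph{specific} four-piece datum $\overline{u_0}$ (values $u_-,u_m,u_M,u_+$ with $u_m=\operatorname{ess\,inf}u_0$, $u_M=\operatorname{ess\,sup}u_0$, and the interior breakpoint $x_0$ chosen so that $\int\overline{u_0}=\int u_0$ on $[x_-,x_+]$). For $\overline u$ the geometry is explicit: two shock curves $s_\pm$ and one rarefaction fan. The paper then tracks the \emph{shock} speeds directly, using that $f(y,z)$ is conserved along genuine characteristics (so the rarefaction values feeding $s_\pm$ are pinned to a strict sublevel of $f$) together with \ref{UC}, to produce a uniform gap $\dot s_+-\dot s_-\le -c<0$ and force $s_\pm$ to merge in finite time. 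The transfer from $\overline u$ back to $u$ is \emph{not} done by the comparison principle but by Dafermos' divergence identity on the region between the extremal backward characteristics of $\overline u$, where the mass-matching choice of $x_0$ makes the boundary terms vanish with the right sign. This is precisely the ``core lemma'' you anticipate; your outline has the right ingredients (speed gap, overshoot absorption, semigroup reduction), but the actual collapse argument requires working with shock curves and the conservation of $f$ along characteristics rather than with genuine characteristics alone.
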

We remark that for spatially homogeneous fluxes, this result was proved by Dafermos and Shearer \cite{HomoShock}. With the additional constraint \ref{P} on the flux, we have the following result.
\begin{theorem}\label{L2 stability}
	\textbf{(Stability of the simple shock)} Let $f$ satisfy the assumptions \ref{S}-\ref{FSP}, and consider the Cauchy problem \eqref{claw} with initial data $u_0\in L^{\infty}$ such that $\pnorm{2}{u_0-\Phi}<\infty$, where $\Phi$ is defined as in \eqref{Riemann}. Further suppose $f$ satisfies \ref{P} in a compact set containing $[u_+,u_-]$ as well as $[\operatorname{ess\text{ }inf} u,\operatorname{ess\text{ }sup} u]$, where $u$ is the entropy solution to \eqref{claw} with initial data $u_0$ and the extrema are taken globally in space-time. Then, there exists a Lipschitz curve $\overline{\xi}$ such that, for all $t>0$:
	\[
	\pnorm{2}{u(\cdot,t)-\Phi(\cdot-\overline{\xi}(t)-\sigma t)}\leq\pnorm{2}{u_0-\Phi}.
	\]
	Note that if $\overline{\xi}(t)=0$, then $v(x,t)=\Phi(x-\sigma t)$ is the entropy solution of \eqref{claw} with initial data \eqref{Riemann}. Thus $\overline{\xi}$ represents the magnitude of the shift required to generate a contraction with respect to the $L^2$-norm.
\end{theorem}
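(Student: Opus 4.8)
The plan is to run the relative entropy (\emph{a}-contraction with shifts) argument of Leger \cite{LegerShock}, with the heterogeneous source term controlled precisely by \ref{P}. Taking $\eta(u)=u^2/2$ so that the relative entropy is $\eta(u\,|\,c)=(u-c)^2/2$, I would track
\[
E(t)=\tfrac12\int_{-\infty}^{h(t)}(u(x,t)-u_-)^2\,dx+\tfrac12\int_{h(t)}^{\infty}(u(x,t)-u_+)^2\,dx=\tfrac12\pnorm{2}{u(\cdot,t)-\Phi(\cdot-h(t))}^2,
\]
where the shock location $h(t)=\sigma t+\overline{\xi}(t)$, with $h(0)=0$, is to be constructed. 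Since $E(0)=\tfrac12\pnorm{2}{u_0-\Phi}^2$, the theorem reduces to producing a Lipschitz $h$ along which $E$ is non-increasing; the shift $\overline{\xi}(t)=h(t)-\sigma t$ will then be Lipschitz provided $\dot h$ stays bounded.

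First I would differentiate $E$. For each reference state $c\in\{u_-,u_+\}$ the relative entropy flux is $Q(x,u\,|\,c)=Q(x,u)-Q(x,c)-\eta'(c)(f(x,u)-f(x,c))$, and the entropy inequality \eqref{Het entropy} together with the conservation law gives, in the sense of distributions,
\[
\partial_t\eta(u\,|\,c)+\partial_x Q(x,u\,|\,c)\leq\mathcal{S}(x,u;c),\qquad \mathcal{S}(x,u;c)=-\int_c^u f_x(x,v)\,dv,
\]
the source $\mathcal{S}$ being computed exactly as in section~\ref{neg het} (using $f_x(\cdot,c)\equiv0$, which holds for $c=u_\pm$ by \ref{S}). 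This is where \ref{P} enters decisively: since $f_{xu}\geq0$ and $f_x(\cdot,u_\pm)\equiv0$, the integrand has the sign of $v-c$, so $\int_c^u f_x(x,v)\,dv\geq0$ for every $u$, whence $\mathcal{S}(x,u;u_-)\leq0$ on $(-\infty,h)$ and $\mathcal{S}(x,u;u_+)\leq0$ on $(h,\infty)$. Thus the entire interior contribution to $\dot E$ is dissipative, and only the flux and moving-boundary terms remain.

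Collecting those remaining contributions — the moving-boundary terms from differentiating the cut-off at $x=h(t)$, the traces of the relative fluxes there, and the terms at $\pm\infty$, which vanish along a suitable sequence by the $L^2$-integrability of $u-\Phi$ — yields
\[
\dot E(t)\leq D(\dot h;u_L,u_R):=\dot h\left[\tfrac{(u_L-u_-)^2}{2}-\tfrac{(u_R-u_+)^2}{2}\right]-Q(h,u_L\,|\,u_-)+Q(h,u_R\,|\,u_+),
\]
where $u_L=u(h-,t)$, $u_R=u(h+,t)$ are the strong traces, which exist by the convexity and Hamilton--Jacobi theory recalled above. Freezing the spatial variable at $x=h$, the flux $g:=f(h,\cdot)$ is convex, and crucially \ref{S} forces the reference shock speed $\tfrac{g(u_-)-g(u_+)}{u_--u_+}$ to equal $\sigma$ independently of $h$; hence $D$ is exactly the homogeneous dissipation functional for the extremal shock $(u_-,u_+)$ of Leger \cite{LegerShock}. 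I would then invoke (or reprove) the key dissipation lemma of that theory: for the quadratic entropy and an extremal shock there is a bounded velocity $V(u_L,u_R)$, consistent for admissible traces $u_L\geq u_R$, with $D(V;u_L,u_R)\leq0$. Defining $h$ as the Filippov solution of the ODE $\dot h=V(u(h-,t),u(h+,t))$ produces a Lipschitz curve along which $\dot E\leq0$.

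Integrating $\dot E\leq0$ and recalling $E(0)=\tfrac12\pnorm{2}{u_0-\Phi}^2$ gives $\pnorm{2}{u(\cdot,t)-\Phi(\cdot-\sigma t-\overline{\xi}(t))}\leq\pnorm{2}{u_0-\Phi}$ with $\overline{\xi}=h-\sigma t$ Lipschitz (since $V$ is bounded), which is the claim; the quantitative Corollary~\ref{magnitude} on the size of $\overline{\xi}$ should follow by retaining the strict part of the dissipation. I expect the main obstacle to be twofold. The analytic part — integrability of the relative entropy, existence and behaviour of the traces $u_L,u_R$, and the vanishing of the fluxes at $\pm\infty$ — is handled by the strong-trace and semiconcavity theory already cited. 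The genuinely delicate point is the existence of a \emph{bounded} shift velocity $V$ rendering $D\leq0$: this is the heart of the \emph{a}-contraction method, and although freezing $x=h$ reduces it to Leger's homogeneous lemma, one must verify that the reduction is uniform in $h$ and that the resulting discontinuous ODE admits a Lipschitz Filippov solution. The compatibility supplied by \ref{S} (a location-independent reference shock speed) and by \ref{P} (the sign of the source) is precisely what makes this reduction legitimate.
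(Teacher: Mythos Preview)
Your handling of the heterogeneous source term is correct and matches the paper exactly: under \eqref{P} and \eqref{S}, the pair $(\eta,Q)$ with $\eta(u,c)=(u-c)^2$ satisfies $\eta_t+Q_x\leq0$ in divergence form, so the interior contribution to $\dot E$ is dissipative. The divergence from the paper is in how the boundary terms at the shift are controlled.

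The paper does \emph{not} seek a single velocity $V(u_L,u_R)$ making the combined functional $D\leq0$. Following Leger \cite{LegerShock} directly, it introduces \emph{two} curves $\xi_\pm$ as Filippov solutions of
\[
\dot\xi_\pm(t)\in\bigl[q(\xi_\pm,u(\xi_\pm+,t),u_\pm),\,q(\xi_\pm,u(\xi_\pm-,t),u_\pm)\bigr],\qquad q=Q/\eta,
\]
so that each one-sided boundary integral is non-positive \emph{by construction}. The substantive work is transferred to proving the ordering $\xi_+\leq\xi_-$, which is done by a quantitative contradiction argument (Lemmas~\ref{lambda} and \ref{shrinkage}): on any interval where $\xi_+-\xi_-\geq\delta>0$, an entropy estimate with an auxiliary constant $c\in(u_+,u_-)$ bounds the gap by $\kappa\delta$ with $\kappa$ independent of $\delta$; letting $\delta\to0$ forces $\xi_+\leq\xi_-$. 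Any Lipschitz curve sandwiched between $\xi_+$ and $\xi_-$ then serves as $\overline{\xi}+\sigma t$.

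The gap in your route is the lemma you invoke. Leger \cite{LegerShock} does \emph{not} prove a single-velocity dissipation statement of the form ``there is a bounded $V(u_L,u_R)$ with $D(V;u_L,u_R)\leq0$ for all admissible traces''; his argument is precisely the two-curve ordering above. The single-shift formulation you sketch is closer in spirit to later $a$-contraction work, and even in the homogeneous scalar case it is not a black-box citation: the coefficient $\eta(u_L\,|\,u_-)-\eta(u_R\,|\,u_+)$ of $\dot h$ in $D$ changes sign as $(u_L,u_R)$ vary, so constructing a bounded $V$ and establishing Filippov well-posedness of $\dot h=V(u(h-),u(h+))$ requires a genuine argument that you have not supplied. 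You correctly flag this as the delicate point, but ``reduces to Leger's homogeneous lemma'' is not accurate---there is no such lemma to reduce to. The paper's two-curve construction sidesteps the issue entirely at the cost of the ordering proof, which uses only \eqref{UC}, \eqref{P}, and the divergence-form entropy inequality.
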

The magnitude of the shift $\overline{\xi}$ can be controlled by the magnitude of the initial perturbation. Thus, in particular, we have the following corollary.
\begin{corollary}\label{magnitude}
	\textbf{(Magnitude of the shift)} For any curve $\overline{\xi}$ as in Theorem \ref{L2 stability}, we claim that $\abs{\overline{\xi}(t)}\leq K\sqrt{t}$, where the constant $K$ depends only on $\supnorm{u_0},f,u_{\pm},\text{ and }\pnorm{2}{u_0-\Phi}$.
\end{corollary}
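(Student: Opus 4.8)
The plan is to reduce the statement to a \emph{displaced-mass} estimate and then to bound that mass using finite speed of propagation together with the Cauchy--Schwarz inequality. Throughout write $c=u_--u_+>0$ and $a(t)=\overline{\xi}(t)+\sigma t$ for the perturbed shock location. The starting point is the elementary computation for Heaviside-type profiles,
\[
\int_{\mathbb{R}}\bigl(\Phi(x-\sigma t)-\Phi(x-a(t))\bigr)\,dx=-c\,\overline{\xi}(t),
\qquad
\pnorm{2}{\Phi(\cdot-\sigma t)-\Phi(\cdot-a(t))}^2=c^2\,\abs{\overline{\xi}(t)},
\]
the first integrand being supported on the interval of length $\abs{\overline{\xi}(t)}$ between the two shock positions. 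Thus it suffices to control the displaced mass on the left by $Kc\sqrt{t}$. Observe at once that the $L^2$ route is too lossy: the triangle inequality only governs $\pnorm{2}{\Phi(\cdot-\sigma t)-\Phi(\cdot-a(t))}=c\sqrt{\abs{\overline{\xi}(t)}}$, so an $O(\sqrt t)$ bound on an $L^2$ distance would yield merely $\abs{\overline{\xi}(t)}=O(t)$. The correct $\sqrt t$ rate must instead come from a mass ($L^1$) argument, in which the displaced mass scales \emph{linearly} in the shift.

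Next I would set up a localized mass balance. Let $\Lambda=\sup\{\,\theta(v):v\in[\operatorname{ess\,inf}u,\operatorname{ess\,sup}u]\,\}$, which is finite by \ref{FSP} and the a priori bound $\supnorm{u}\le\supnorm{u_0}$; this is the maximal characteristic speed. For $R>0$ let $u_0^R$ agree with $u_0$ on $[-R,R]$ and with $\Phi$ outside, and let $u^R$ be the corresponding entropy solution, to which Theorem~\ref{L2 stability} applies. Then $u_0^R-\Phi$ is compactly supported, so Cauchy--Schwarz gives $\abs{\int(u_0^R-\Phi)\,dx}\le\sqrt{2R}\,\pnorm{2}{u_0-\Phi}$. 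Since $u^R$ and the pure shock $\Phi(\cdot-\sigma t)$ solve the same equation and, by finite speed of propagation, coincide outside the light cone $\mathrm{bw}(t)=[-R-\Lambda t,\,R+\Lambda t]$, their difference is compactly supported and the weak formulation yields exact conservation of the excess mass,
\[
m^R(t):=\int_{\mathbb{R}}\bigl(u^R(x,t)-\Phi(x-\sigma t)\bigr)\,dx=\int_{\mathbb{R}}\bigl(u_0^R-\Phi\bigr)\,dx=:m_0^R,
\qquad\abs{m_0^R}\le\sqrt{2R}\,\pnorm{2}{u_0-\Phi};
\]
here the boundary fluxes at $\pm\infty$ cancel against $\sigma c$ precisely because \ref{S} forces $f(\cdot,u_{\pm})$ to equal the constants appearing in \eqref{RH}.

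With conservation in hand, apply Theorem~\ref{L2 stability} to $u^R$ to obtain a shift $\overline{\xi}^R$, with $a^R(t)=\overline{\xi}^R(t)+\sigma t$, satisfying $\pnorm{2}{u^R(\cdot,t)-\Phi(\cdot-a^R(t))}\le\pnorm{2}{u_0^R-\Phi}\le\pnorm{2}{u_0-\Phi}$. For $R\asymp t$ large enough that the shock at $a^R(t)$ lies inside $\mathrm{bw}(t)$, the function $u^R(\cdot,t)-\Phi(\cdot-a^R(t))$ is supported in $\mathrm{bw}(t)$, so combining the displaced-mass identity with conservation gives
\[
c\,\overline{\xi}^R(t)=m_0^R-\int_{\mathbb{R}}\bigl(u^R(x,t)-\Phi(x-a^R(t))\bigr)\,dx,
\]
whence Cauchy--Schwarz over $\mathrm{bw}(t)$ and the contraction bound yield $c\,\abs{\overline{\xi}^R(t)}\le\bigl(\sqrt{2R}+\sqrt{\abs{\mathrm{bw}(t)}}\bigr)\pnorm{2}{u_0-\Phi}$. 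Choosing $R\asymp t$ produces $\abs{\overline{\xi}^R(t)}\le K\sqrt t$ with $K$ depending only on $c$, $\Lambda$, $\sigma$, and $\pnorm{2}{u_0-\Phi}$ — hence only on the quantities listed. Finally I would transfer the bound to $u$: taking $R$ so large that the inner cone $[-R+\Lambda t,\,R-\Lambda t]$ contains both shock regions, finite speed of propagation gives $u=u^R$ there, so any admissible shift $\overline{\xi}$ for $u$ and the shift $\overline{\xi}^R$ both fit $u$ within $\pnorm{2}{u_0-\Phi}$ near the shock; the $L^2$ geometry of shocks forces $\abs{\overline{\xi}(t)-\overline{\xi}^R(t)}\le 4\pnorm{2}{u_0-\Phi}^2/c^2$, a bounded correction absorbed into $K\sqrt t$ for $t\ge1$, while for $t\le1$ the claim is immediate from the normalization $\overline{\xi}(0)=0$ and the Lipschitz continuity of $\overline{\xi}$.

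The main obstacle is exactly the passage from $L^2$ to mass control. For a general $L^2$ perturbation the total excess mass $\int(u_0-\Phi)\,dx$ may be infinite, so conservation cannot be invoked on the whole line; the device of truncating the data to $[-R,R]$, using finite speed of propagation to confine the discrepancy to a light cone of width $O(t)$, and only then applying Cauchy--Schwarz — which converts the $L^2$ bound on a window of width $O(t)$ into an $O(\sqrt t)$ mass bound — is the technical heart of the argument. The remaining work is bookkeeping: checking that the truncation shift $\overline{\xi}^R$ and an arbitrary admissible shift $\overline{\xi}$ for the untruncated solution differ by at most a fixed constant, which is where the non-uniqueness of optimal shifts is handled.
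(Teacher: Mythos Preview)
Your approach is correct in outline and shares the paper's core mechanism---Cauchy--Schwarz on a spatial window of width $O(t)$ to convert $L^2$ control into an $O(\sqrt t)$ mass bound---but the route is more circuitous than necessary. The paper avoids truncation entirely: rather than invoking \emph{conservation} of excess mass (which, as you correctly observe, forces compact support and hence the detour through $u_0^R$), it uses Kruzkov's localised \emph{$L^1$-contraction} directly on the untruncated solution. Concretely, the paper writes $c\,\abs{\overline{\xi}(t)}=\pnorm{L^1(I_t)}{\Phi(\cdot-\sigma t)-\Phi(\cdot-a(t))}$ on an interval $I_t$ of width $O(t)$ (the Lipschitz bound $\abs{\xi_\pm(t)}\le Lt$ from the construction in Theorem~\ref{L2 stability} fixes the window), splits via the triangle inequality through $u(\cdot,t)$, bounds $\pnorm{L^1(I_t)}{\Phi(\cdot-\sigma t)-u(\cdot,t)}\le\pnorm{L^1(I_t')}{\Phi-u_0}$ by $L^1$-contraction on a cone, and then applies H\"older to each piece together with the $L^2$ contraction of Theorem~\ref{L2 stability}. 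This sidesteps your truncation, the conservation identity, and the transfer step comparing $\overline{\xi}$ with $\overline{\xi}^R$. Your transfer argument does work---once you use the Lipschitz bound on $\xi_\pm$ (which you only invoke at the end for $t\le1$) to place both shock positions inside the inner cone, the estimate $c\sqrt{\abs{\overline{\xi}-\overline{\xi}^R}}\le 2\pnorm{2}{u_0-\Phi}$ follows---but it is extra machinery. The moral is that $L^1$-contraction on bounded intervals is strictly more flexible here than global mass conservation, and lets one argue with the original solution throughout.
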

The fact that $\overline{\xi}$ is Lipschitz is \textit{not} established by this corollary; it is proved independently in Theorem~\ref{L2 stability}.

\section{Preliminaries}
We briefly recall Dafermos' theory of `generalised characteristics' for scalar conservation laws with convex flux \cite{GenChar}, which we use throughout the paper.

\subsection{Generalised characteristics}
For smooth solutions of \eqref{claw}, the system of characteristic ODEs is as follows.
\begin{equation}\label{char ode}
	\begin{split}
		\dot{y}(s)&=f_u(y(s),z(s)), \\
		\dot{z}(s)&=-f_x(y(s),z(s)),
	\end{split}
\end{equation}
where $y(s)$ is the trajectory of the characteristic in space-time, while the paired function $z(s)$ corresponds to the value of the solution $u$ along the curve $y(s)$. More generally, given an entropy solution of \eqref{claw}, we have that from every point $(x,t)$ with $t>0$, we can define a unique \textit{forward characteristic} $y_f:[t,\infty)\to\mathbb{R}$ and a non-empty set of \textit{backward characteristics} $y_b:[0,t]\to\mathbb{R}$, i.e. Lipschitz curves with $y(t)=x$ solving the differential inclusion
\begin{equation}\label{DafDiff}
	\dot{y}(s)\in[f_u(y(s),u(y(s)+,s)),f_u(y(s),u(y(s)-,s))]
\end{equation}
on their respective domains, where $u(x\pm,t)$ respectively denote the left and right traces in space of $u$ at $(x,t)$. Since $f$ is convex in the second variable, $f_u$ is monotonically increasing, and entropy solutions of $\eqref{claw}$ satisfy the inequality $u(y(s)-,s)\geq u(y(s)+,s)$, hence the interval in \eqref{DafDiff} is well-defined for all $(x,t)$. Forward characteristics are also defined for the initial time when left and right traces exist, but they may not be unique.

This differential inclusion is also stronger than it appears prima facie. At points of continuity of $u$, of course, $\dot{y}$ only has one permissible value, but even on points of discontinuity, it can be shown that $\dot{y}$ has a determinate value. That is, let $y:[t_0,T]\to\mathbb{R}$ be a Lipschitz solution to \eqref{DafDiff} for some $t_0\geq0$. Then, for almost all $t\in[t_0,T]$, we have that
\[
\dot{y}(t)=
\begin{cases}
	&f_u(y(t),u(y(t),t))\text{ if }u(y(t)-,t)=u(y(t)+,t), \\
	\smallskip
	&\dfrac{f(y(t),u(y(t)-,t))-f(y(t),u(y(t)+,t))}{u(y(t)-,t)-u(y(t)+,t)}\text{ if }u(y(t)-,t)>u(y(t)+,t).
\end{cases}
\]
The second equation of \eqref{char ode} now comes about as the value function. Indeed, the characteristics that only pass through points of continuity of $u$, termed as \textit{genuine characteristics}, satisfy $\eqref{char ode}$ with $z(t)=u(y(t),t)$. The other alternative, satisfied at points of discontinuity, is just the Rankine-Hugoniot condition. Such characteristics are called \textit{shocks}. A forward characteristic that is a shock at some time $T$, moreover, remains a shock for all greater times as well. That is to say, shocks cannot vanish in finite time, though they may merge. Thus, a forward characteristic from a point of discontinuity is always a shock.

The set of backward characteristics from a point $(x,t)$ is bounded by two curves $y_{\pm}$, which we call the \textit{minimal} and \textit{maximal} characteristics, respectively. These two (possibly equal) curves are themselves genuine characteristics in the sense defined above, and solve \eqref{char ode} along with $z\pm(s)=u(y_\pm(s),s)$ with the terminal conditions
\[
\begin{split}
	y_\pm(t)&=x, \\
	z_\pm(t)&=u(x\pm,t).
\end{split}
\]
Thus, given an entropy solution $u$, every point in space at positive time has at least one backward characteristic associated with it, which is unique if and only if $x$ is a point of continuity of $u$. Moreover, the value of the \textit{cádlag} {(right-continuous with left limits)} representative of $u$ is given by the value $z(t)$ associated with its minimal backward characteristic at $t$. The left limit of $u$ on the other hand, gives us the value $z(t)$ associated with the maximal backward characteristic. One other important fact about genuine characteristics is that they only intersect at their endpoints.

In the homogeneous case, it is easily seen that such characteristics/minimisers must be straight lines in space-time. This yields the well-known Hopf-Lax and Lax-Oleinik formulas for solutions of \eqref{HJ} and \eqref{claw} respectively \cite{evans}. Another advantage of this `correspondence method' is that qualitative properties of solutions, and also some control aspects, become easier to analyse \cite{HomoShock,structuretheorem,exactcontrol}.

\subsection{Structure of the paper}
This article has been arranged as follows: in section \ref{MR} we state the theorems proved in our framework; the proofs themselves are deferred to sections \ref{proofs} and \ref{proofs2}. There is a subtle back-and-forth in the overall argument establishing the results. In particular, a special case of Theorem~\ref{SingleShock} is used to prove the general case of Lemma~\ref{main lemma}, which in turn proves Theorem~\ref{SingleShock}, without arguing in a circle.

We remark that Lemma~\ref{main lemma} comprises the bulk of our argument with respect to Theorem~\ref{SingleShock}, and lies at the heart of this paper. Since the comparison principle still holds for \eqref{claw} even with spatial heterogeneity, the argument proving Theorem~\ref{SingleShock} from Lemma~\ref{main lemma} is quite straightforward. The major difficulty that we face is establishing the finite-in-time emergence result for the special initial data as detailed in the lemma, for which we require the assumptions \ref{S}-\ref{FSP}.

Let us explicitly detail the steps for reference, so as to show that our argument is not circular.
\begin{enumerate}
	\item We show that Theorem~\ref{SingleShock} follows from Lemma~\ref{main lemma}.
	
	\item We prove Lemma~\ref{main lemma} in the special case where $u_{m}=u_{+}$ and $u_{M}=u_{-}$; this will also prove Theorem~\ref{SingleShock} for initial data $u_0$ taking values in $[u_{+},u_{-}]$ and satisfying the far-field conditions \eqref{far field conditions}.
	
	\item We show that the general case of Lemma~\ref{main lemma} reduces to the special case in finite time, i.e., for some $T>0$, we show that $u(x,T)\in[u_{+},u_{-}]$ for almost all $x\in\mathbb{R}$.
	
	\item This completes the proof of Lemma~\ref{main lemma} by the semigroup property of solutions to \eqref{claw}, since we have already established Theorem~\ref{SingleShock} for the special case.
	
	\item Hence, this also completes the proof of Theorem~\ref{SingleShock} in full generality, since we have already shown that it follows from Lemma~\ref{main lemma}.
\end{enumerate}
Then, under a further assumption on the mixed partial derivatives of the flux, as detailed below, we prove Theorem~\ref{L2 stability} and Corollary~\ref{magnitude}, establishing quantitative estimates on the stability of our shock profile connecting $u_{-}$ to $u_{+}$.

\section{Proofs I}\label{proofs}
\subsection{Finite-time emergence of a simple shock}
We prove Theorem~\ref{SingleShock} for a specific class of piecewise constant initial data in the following lemma, and prove the general case by a comparison argument. The proof of the lemma is deferred to a later subsection.
\begin{lemma}\label{main lemma}
	\textbf{(Evolution of piecewise constant initial data)} Let $f$ be a flux satisfying the assumptions \ref{S}-\ref{FSP}, and let $u(x,t)$ be the entropy solution of the Cauchy problem \eqref{claw} with initial data $u_0$ such that for some $x_{-}\leq x_0\leq x_{+}$:
	\begin{equation}\label{special data}
		u_0(x)=
		\begin{cases}
			u_{-}&\text{ if }x<x_{-}, \\
			u_m&\text{ if }x\in(x_{-},x_0), \\
			u_M&\text{ if }x\in(x_0,x_{+}), \\
			u_{+}&\text{ if }x>x_{+},
		\end{cases}
	\end{equation}
	where $u_m\leq u_{+}<u_{-}\leq u_M$. Then, for some $X\in\mathbb{R}$ and finite $T>0$, we have that for all $t>T$:
	\[
	u(x,t)=
	\begin{cases}
		u_{-}&\text{ if }x<X+\sigma(t-T), \\
		u_{+}&\text{ if }x>X+\sigma(t-T).
	\end{cases}
	\]
\end{lemma}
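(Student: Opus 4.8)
The plan is to prove the lemma in two stages, mirroring the reduction sketched in the introduction: first the special case $u_m=u_+$, $u_M=u_-$, in which the data takes values in $[u_+,u_-]$; and then the general case, by showing the solution enters this range in finite time and invoking the special case through the semigroup property. For the special case I would begin by confining the solution: since $u\equiv u_\pm$ are, by \ref{S}, the two constant entropy solutions and $u_+\leq u_0\leq u_-$, the comparison principle for \eqref{claw} gives $u_+\leq u(\cdot,t)\leq u_-$ for all $t\geq0$. I would then read off the coarse wave structure via Dafermos' generalised characteristics \cite{GenChar}. The data, equal to $u_-,u_+,u_-,u_+$ on successive intervals, has downward jumps at $x_-$ and $x_+$, from which emanate forward characteristics that are genuine shocks $\chi_-(t),\chi_+(t)$, and an upward jump at $x_0$, from which a rarefaction fan opens. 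Tracing backward characteristics and using that stationarity \ref{S} forces $\dot z=-f_x(y,u_\pm)=0$ along any genuine characteristic carrying the value $u_\pm$, I would show $u\equiv u_-$ to the left of $\chi_-$ and $u\equiv u_+$ to the right of $\chi_+$, so that the nontrivial structure is squeezed between the two shocks.

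The goal then becomes showing that $\chi_-$ and $\chi_+$ coalesce at a finite time $T$: at the coalescence point the left shock carries the pair $(u_-,u^\ast)$ and the right shock $(u^\ast,u_+)$ for a common intermediate trace $u^\ast$, so they merge into a single $(u_-,u_+)$ shock, which by stationarity then propagates rigidly with the Rankine--Hugoniot speed \eqref{RH}. By the Lax shock inequalities, $\chi_-$ is faster than the slowest ($u_+$-)edge of the fan while $\chi_+$ is slower than the fastest ($u_-$-)edge, so in finite time both shocks penetrate the rarefaction. Thereafter the right trace $w_-(t)$ of $\chi_-$ and the left trace $w_+(t)$ of $\chi_+$ satisfy, along the characteristic ODE \eqref{char ode} together with the Rankine--Hugoniot relation, that $w_-$ is strictly increasing and $w_+$ strictly decreasing.

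This coalescence is the crux, and I expect it to be the main obstacle. Uniform convexity \ref{UC} supplies the secant-minus-tangent gap that drives these monotonicities at a rate bounded below away from the diagonal $w_-=w_+$, while finite speed of propagation \ref{FSP} keeps the interaction spatially localised so that the curved heterogeneous characteristics can be controlled. Since $w_-$ starts near $u_+$ and $w_+$ near $u_-$ with $u_+<u_-$, the two monotone traces must cross at a finite $T$: were they never to meet, their limits would satisfy $\lim w_-\leq\lim w_+$, yet the convexity-driven rates force $\lim w_-=u_-$ and $\lim w_+=u_+<u_-$, a contradiction. Ruling out a merely asymptotic approach — i.e.\ upgrading this crossing to a genuinely finite time in the presence of $x$-dependent $f_u$ — is precisely where \ref{S}, \ref{UC} and \ref{FSP} are all used in earnest.

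For the general case I would eliminate the excursions of $u$ outside $[u_+,u_-]$ in finite time by the same interaction mechanism. Since $u_-\in[u_m,u_M]$, the rarefaction at $x_0$ contains a genuine characteristic carrying the value $u_-$, which by \ref{S} transports $u_-$ for all time and bounds the region $\{u>u_-\}$ on the left; that region is bounded on the right by the shock issuing from $x_+$, whose left trace decreases, so the two meet in finite time and $\{u>u_-\}$ is consumed. The symmetric argument disposes of $\{u<u_+\}$. Hence $u(\cdot,T')\in[u_+,u_-]$ almost everywhere for some finite $T'$; restarting the evolution at time $T'$ and applying the special case — which, through the comparison argument that derives Theorem~\ref{SingleShock} from the lemma, already yields the conclusion for data valued in $[u_+,u_-]$ — completes the proof without circularity.
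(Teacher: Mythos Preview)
Your two-stage architecture and the reduction via the semigroup property match the paper exactly, and the identification of the wave pattern (two shocks bracketing a rarefaction) is correct. The gaps are in the quantitative mechanisms that force the interactions to terminate in \emph{finite} time.

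\textbf{Special case.} Your contradiction via the trace values $w_\pm$ does not close. First, monotonicity of $w_-$ and $w_+$ is asserted, not proved; in the heterogeneous setting the rarefaction profile need not be monotone in $x$, and even if it were, you have not linked ``the trace values cross'' to ``the shock curves meet''. Second, the rate you invoke is the secant-minus-tangent gap $\frac{f(u_-)-f(w_-)}{u_--w_-}-f_u(\cdot,w_-)$, which by \ref{UC} is of order $u_--w_-$ and hence \emph{vanishes} as $w_-\to u_-$; this gives only exponential approach, not $\lim w_-=u_-$ in any useful sense. The paper avoids the trace values altogether and instead exploits a fact you do not mention: along any genuine characteristic $(y,z)$ solving \eqref{char ode} one has $\frac{d}{dt}f(y,z)=f_x\dot y+f_u\dot z=0$, so the flux value is a conserved label even though $z$ itself is not. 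This pins down the left trace of $s_+$ in flux-space and, combined with \ref{UC}, yields a \emph{uniform} negative upper bound on $\dot s_+-\sigma$ once $s_+$ has entered the rarefaction by any fixed amount. That uniform speed gap, not a trace-crossing argument, is what forces $s_\pm$ to meet in finite time.

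\textbf{General case.} You correctly isolate the $u_-$-characteristic $y_-$ inside the fan and the shock $s_+$ from $x_+$, but ``left trace decreases, so the two meet in finite time'' is again only asymptotic heuristics: in the heterogeneous case neither the monotonicity nor the rate is justified, and the intermediate state $u_M$ between $x_0$ and $x_+$ is not stationary, so the picture is more complicated than you sketch. The paper's device here is different and decisive: apply Dafermos' divergence theorem to $(u-u_+)_t+(f(x,u)-f(u_+))_x=0$ on the region bounded by $y_-$ and $s_+$. The boundary contribution along $s_+$ vanishes identically by Rankine--Hugoniot, while along $y_-$ the integrand $f(u_+)-f(u_-)-f_u(y_-,u_-)(u_+-u_-)$ is, by \ref{UC}, at least $\tfrac{\alpha}{2}(u_--u_+)^2$. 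This produces the inequality
\[
0\le\int_{y_-(T)}^{s_+(T)}(u-u_+)\,dx\le (u_M-u_+)(x_+-x_0)-\tfrac{\alpha}{2}(u_--u_+)^2\,T,
\]
which fails for large $T$ and hence forces $y_-$ and $s_+$ to meet. No monotonicity of traces is needed.
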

That is to say, the entropy solution $u$ exhibits a simple shock from the point $(X,T)$.

Assuming the validity of Lemma~\ref{main lemma}, let us proceed to prove the theorem. This proof follows the argument of \cite{HomoShock}, but we provide the details here for completeness. The non-trivial effect of heterogeneity enters in the proof of Lemma~\ref{main lemma}. Although the backward characteristics are not necessarily straight lines in space-time as in the homogeneous case, this does not affect the argument much. Let $u_0\in L^{\infty}(\mathbb{R})$ satisfying the far-field conditions \eqref{far field conditions} be given. Define $u_m=\operatorname{ess\text{ }inf}{u_0}$ and $u_M=\operatorname{ess\text{ }sup}{u_0}$; by construction $u_m\leq u_{+}<u_{-}\leq u_M$, by \eqref{far field conditions}. Now let $x_0\in[x_{-},x_{+}]$ be the unique value such that
\begin{equation}\label{convex combo}
	\int_{x_-}^{x_+}u_0(x)dx=(x_0-x_-)u_m+(x_+-x_0)u_M,
\end{equation}
and define $\overline{u}_0(x)$ as follows:
\[
\overline{u}_0(x)=
\begin{cases}
	u_{-}&\text{ if }x<x_{-}, \\
	u_m&\text{ if }x\in(x_{-},x_0), \\
	u_M&\text{ if }x\in(x_0,x_{+}), \\
	u_{+}&\text{ if }x>x_{+},
\end{cases}
\]
so that by \eqref{convex combo},
\[
\int_{x_-}^{x_+}u_0(x)dx=\int_{x_{-}}^{x_{+}}\overline{u}_0(x)dx.
\]
Now, from Lemma~\ref{main lemma} we know that the entropy solution to the scalar conservation law with initial data $\overline{u}_0(x)$ evolves into a simple shock in finite time. Let $\overline{u}(x,t)$ be the unique entropy solution of the conservation law for initial data $\overline{u}_0$ and $u(x,t)$ the unique entropy solution for initial data $u_0$; we have that
\begin{equation}
	\label{divzero}
	(\overline{u}-u)_t+(f(x,\overline{u})-f(x,u))_x=0.
\end{equation}
Let $(X,T)$ denote a point satisfying the following properties:
	\begin{itemize}
		\item for $t>T$, the solution $\overline{u}(x,t)$ is a simple shock connecting $u_-$ and $u_+$, with the shock curve passing through $(X,T)$ with speed $\sigma$.
		\item If $y_{-},y_{+}$ denote the minimal and maximal backward characteristics from $(X,T)$ corresponding to $\overline{u}$, then $y_{-}(0)<x_-,y_+(0)>x_+$.
	\end{itemize}
We claim that $u$ also exhibits a simple $\sigma$-shock from this point. That is, $u=\overline{u}$ for $t>T$. To see this, pick $\hat{t}>T$ and let $\hat{x}<X+\sigma(\hat{t}-T)$ be a point of continuity for $u$. Since the flux is uniformly convex in the $u$-variable, a unique backward characteristic exists. Let, if possible, $u(\hat{x},\hat{t})\neq u_{-}$. We know that there is a unique backward characteristic from $(\hat{x},\hat{t})$ for $\overline{u}$ corresponding to the value $u_{-}$. Let $y,\overline{y}$ respectively denote these characteristics, and $z,\overline{z}$ the respective value functions along these characteristics. We already know that $\overline{z}\equiv u_{-}$.

Now, by \cite[Theorem 3.3]{GenChar}, if $y(0)<x_{-}$, then $z(0)=u_{-}$ and hence by uniqueness of solutions to the ODE system \eqref{char ode} for initial conditions $(y(0),z(0))=(y_0,u_{-})$, we would have $u(\hat{x},\hat{t})=z(\hat{t})=u_{-}$. Therefore, suppose if possible that $y_0=y(0)\geq x_{-}$, where we include $x_-$ because the initial data may be discontinuous at this point, and the initial value of $z$ may not be determined merely by $y(0)$; in this case we can effectively apply the `divergence theorem' to the region enclosed by the two curves \cite{GenChar}; then by \eqref{divzero} and the characteristic equations \eqref{char ode}:
\[
\begin{split}
	0\leq&\int_{\overline{y}(0)}^{y(0)}u_0(x)-\overline{u}_0(x)dx \\
	=&\int_{0}^{\hat{t}}f(\overline{y}(t),\overline{z}(t))-f(\overline{y}(t),u(\overline{y}(t),t))-f_u(\overline{y}(t),\overline{z}(t))(\overline{z}(t)-u(\overline{y}(t),t))dt \\
	&-\int_0^{\hat{t}}f(y(t),\overline{u}(y(t),t))-f(y(t),z(t))-f_u(y(t),z(t))(\overline{u}(y(t),t)-z(t))dt, \\
\end{split}
\]
where the first inequality follows because $\overline{y}(0)<x_{-}$ by the non-intersection property of genuine characteristics, and the integral is constructed precisely to ensure that it is well-signed by the choice \eqref{convex combo} of $x_0$. On the other hand, by \ref{UC}, we have that
\[
f(x,b)-f(x,a)-f_u(x,a)(b-a)\geq\dfrac{\alpha}{2}(b-a)^2,
\]
therefore if $u(\hat{x},\hat{t})\neq u_-$, the integrands over $[0,\hat{t}]$ are non-trivial by continuity at least near $t=\hat{t}$, and hence (note that the integrals appear with a negative sign, flipping the inequality we obtain from~\ref{UC})
\[
\begin{split}
	0=&-\int_{0}^{\hat{t}}f(\overline{y}(t),u(\overline{y}(t),t))-f(\overline{y}(t),\overline{z}(t))-f_u(\overline{y}(t),\overline{z}(t))(u(\overline{y}(t),t)-\overline{z}(t))dt \\
	&-\int_0^{\hat{t}}f(y(t),\overline{u}(y(t),t))-f(y(t),z(t))-f_u(y(t),z(t))(\overline{u}(y(t),t)-z(t))dt \\
    \leq&-\dfrac{\alpha}{2}\int_{0}^{\hat{t}}\left(u(\overline{y}(t),t)-\overline{z}(t)\right)^2dt-\dfrac{\alpha}{2}\int_0^{\hat{t}}\left(\overline{u}(y(t),t)-z(t)\right)^2dt \\
	<&\text{ }0,
\end{split}
\]
which is a contradiction. Therefore, our assumption $u(\hat{x},\hat{t})\neq u_{-}$ is false. Similarly we can show that for $\hat{x}>X+\sigma(\hat{t}-T),u(\hat{x},\hat{t})=u_{+}$ almost everywhere, which completes the proof. Now that we see how the main theorem follows from Lemma~\ref{main lemma}, let us proceed to prove the lemma itself.

\subsection{Evolution of piecewise constant initial data}\label{mainlemma proof}
Lemma~\ref{main lemma} is proved in two steps; first, we prove the lemma for initial data satisfying $u_0(x)\in[u_{+},u_{-}]$ with $u_M=u_{-},u_m=u_{+}$ as per the earlier definitions, and then proceed to the general case. In both cases, the initial data has a shock-type discontinuity at $x=x_{\pm}$. Let $s_{\pm}$ denote the shock curves emanating from $x_{\pm}$ respectively.

Note that if $x_0=x_{\pm}$, then there is nothing to prove, since the initial data is of the simple Riemann problem form, and a single shock emerges from $t=0$ itself from $x_0$. Hence, in what follows, we assume that $x_0$ lies strictly between $x_-$ and $x_+$.

If a generalised characteristic is a shock at some time, then it remains a shock for all greater times. By the non-intersection property of genuine characteristics, it is enough to show that the two shock curves meet in finite time. By the Rankine-Hugoniot conditions:
\[
\dot{s}_{+}(t)=\dfrac{f(s_{+}(t),u_l(s_{+}(t),t))-f(u_+)}{u_l(s_{+}(t),t)-u_{+}},
\]
\[
\dot{s}_{-}(t)=\dfrac{f(u_-)-f(s_{-}(t),u_r(s_{-}(t),t))}{u_{-}-u_r(s_{-}(t),t)},
\]
where $u_l,u_r$ denote the left and right traces of the solution at point(s) on the shock curve(s). Now, note that along genuine characteristics, $f$ is constant, since from the characteristic equations \eqref{char ode}  and \ref{C2} we have:
\[
\begin{split}
	\frac{d}{dt}f(x(t),z(t))&=f_x(x(t),z(t))\dot{x}(t)+f_u(x(t),z(t))\dot{z}(t) \\
	&=f_x(x(t),z(t))f_u(x(t),z(t))-f_u(x(t),z(t))f_x(x(t),z(t)) \\
	&=0.
\end{split}
\]
Hence, as the shocks $s_{\pm}$ begin to interact with the rarefaction from $x_0$, the traces $u_l, u_r$ lie in $(u^+, u^-)$ since, by the Cauchy-Lipschitz theory for ODEs, orbits of the characteristic equations cannot cross, so by \ref{UC} we have that, in any time interval where the shocks do not meet:
\[
\dot{s}_+(t)=\dfrac{f(s_{+}(t),u_l(s_{+}(t),t))-f(u_+)}{u_l(s_{+}(t),t))-u_{+}}<\frac{f(u_-)-f(u_+)}{u_--u_+}=\sigma,
\]
\[
\dot{s}_-(t)=\dfrac{f(u_-)-f(s_{-}(t),u_r(s_{-}(t),t))}{u_{-}-u_r(s_{-}(t),t)}>\frac{f(u_-)-f(u_+)}{u_--u_+}=\sigma.
\]
Hence, once both shocks begin interacting with the rarefactions, and before they meet (if at all), $\dot{s}_+(t)-\dot{s}_-(t)<0$ almost everywhere in $t$. Now, this is not sufficient to guarantee intersection; we need a uniformly negative lower bound for $\dot{s}_+-\dot{s}_-$. For this, we employ \ref{UC} again. Firstly, since $u_l,u_r\in(u_{+},u_{-})$, there exist functions $\lambda_{l}(t),\lambda_{r}(t)$ taking values in $(0,1)$ such that
\[
\begin{split}
	u_l(s_{+}(t),t)&=u_{+}+\lambda_{l}(u_{-}-u_{+}), \\
	u_r(s_{-}(t),t)&=u_{-}+\lambda_{r}(u_{+}-u_{-}).
\end{split}
\]
Now consider $u_l$; a similar analysis can be carried out for $u_r$. By \ref{UC}:
\[
f(s_{+}(t),u_{+}+\lambda_{l}(u_{-}-u_{+}))-f(u_{+})\leq\lambda_{l}(f(u_{-})-f(u_{+}))-\frac{\alpha}{2}\lambda_{l}(1-\lambda_{l})(u_{-}-u_{+})^2.
\]
Hence,
\[
\begin{split}
	\dot{s}_{+}(t)&=\dfrac{f(s_{+}(t),u_{+}+\lambda_{l}(u_{-}-u_{+}))-f(u_{+})}{\lambda_{l}(u_{-}-u_{+})} \\
	&\leq\sigma-\frac{\alpha}{2}(1-\lambda_{l})(u_{-}-u_{+}).
\end{split}
\]
Similarly, we can derive that
\[
\dot{s}_{-}(t)\geq\sigma+\frac{\alpha}{2}(1-\lambda_{r})(u_{-}-u_{+}).
\]
Now, suppose $f(u_{-})>f(u_{+})$; let $t_0$ denote the time when $s_{+}$ starts interacting with the rarefaction, and let $\delta>0$ be small enough so that $s_{\pm}$ do not interact in $(t_0,t_0+\delta)$. Now,
\[
f(u_{-})>f(s_{+}(t_0+\delta),u_{l}(s_{+}(t_0+\delta),t_0+\delta)).
\]
Let $\epsilon=f(u_{-})-f(s_{+}(t_0+\delta),u_{l}(s_{+}(t_0+\delta),t_0+\delta))$, so $\epsilon>0$. Since $f$ is constant along characteristics, we have that for all $t>t_0+\delta:$
\[
f(s_{+}(t),u_l(s_{+}(t),t))<\max{(f(u_{-})-\epsilon,f(u_{+}))},
\]
assuming the two shock curves $s_{\pm}$ do not interact. Then, we have that
\[
\begin{split}
	&f(s_{+}(t),u_{l}(s_{+}(t),t))-f(u_{+}) \\
	\leq&\min\left\{f(u_{-})-f(u_{+})-\epsilon,\lambda_{l}(f(u_{-})-f(u_{+}))-\frac{\alpha}{2}\lambda_{l}(1-\lambda_{l})(u_{-}-u_{+})^2\right\}.
\end{split}
\]
Hence, dividing by $\lambda_{l}(u_{-}-u_{+})$, we have that for all $t>t_0+\delta$:
\[
\dot{s}_{+}(t)\leq\min\left\{\frac{\sigma}{\lambda_{l}}-\frac{\epsilon}{\lambda_{l}(u_{-}-u_{+})},\sigma-\frac{\alpha}{2}(1-\lambda_{l})(u_{-}-u_{+})\right\}.
\]
Hence, $\dot{s}_{+}-\sigma$ is uniformly bounded above by some negative number, whether $\lambda_{l}$ is close to one or not. A similar analysis can be carried out for the case of $f(u_{-})\leq f(u_{+})$, with the analogous estimates obtained now from $\dot{s}_{-}$ instead; in this case, $\sigma-\dot{s}_{-}(t)$ is uniformly bounded above by some negative number and $\dot{s}_+-\sigma\leq0$, which implies again that $\dot{s}_+-\dot{s}_-$ is uniformly negative. Since $s_{+}(0)-s_{-}(0)=x_{+}-x_{-}$, this means that the two shocks must meet in finite time. Thus, we are done proving Lemma \ref{main lemma} in this special case, which also crucially proves Theorem \ref{SingleShock} for initial data such that $u_{+}\leq u_0\leq u_{-}$.

Hence, to complete the proof, it is enough to show that in the general case, the $s_{+}$ shock curve meets the $u_{-}$ characteristic (call it $y_{-}$) in the rarefaction emanating from $x_0$, and that the $s_{-}$ shock curve meets the $u_{+}$ characteristic (call it $y_{+}$) emanating from the same rarefaction. This reduces us to the special case, which we already proved. Again, we only consider the case of $s_{+}$ and $y_{-}$, and the other case will follow mutatis mutandis by a similar argument.

Note that if $x_0=x_{+}$, there is no shock curve emanating from $x_{+}$. However, there is still a rarefaction fan with maximal forward characteristic $y_{+}$ from the point $x_0$, along which $u(y_+(t),t)\equiv u_{+}$. Thus, once $y_{+}$ and $s_{-}$ meet, we immediately see a single shock emerge from the point of their interaction. 

Similarly, if $x_0=x_-$, the rarefaction fan at $x_0$ has minimal forward characteristic $y_{-}$ with $u(y_-(t),t)\equiv u_{-}$, and once again we see a single shock emerge after $y_{-}$ and $s_+$ meet. Hence, the following proof covers all cases (once it is adapted, mutatis mutandis, for $s_-$ and $y_+$).

Given some $T>0$ such that $s_{+},y_{-}$ do not meet before $t=T$, consider the region $\Gamma_T$ bounded by the curves $y_{-}(t),s_{+}(t)$ on $(0,T)$, and the time levels $t=0,t=T$. Let $u$ denote the entropy solution of the Cauchy problem with piecewise constant data \eqref{special data}. Note that $u_r(y_{-}(t),t)=u_{-}$ for a.e. $t\in(0,T)$. Since $u_{+}$ is a stationary solution of \eqref{claw}, we have that
\[
(u-u_{+})_t+(f(x,u)-f(x,u_{+}))_x=0.
\]
Hence, by Dafermos' `divergence theorem' for scalar conservation laws \cite{GenChar}:
\[
\begin{split}
	0=&\int_{y_{-}(T)}^{s_{+}(T)}u(x,T)-u_{+}dx-\int_{x_0}^{x^{+}}u_M-u_{+}dx \\
	&+\int_{0}^{T}f(s_{+}(t),u_l(s_{+}(t),t))-f(u_{+})-\dot{s}_{+}(t)(u_l(s_{+}(t),t)-u_{+})dt \\
	&-\int_{0}^{T}f(u_{-})-f(u_{+})-\dot{y}_-(t)(u_{-}-u_{+})dt \\
	0=&\int_{y_{-}(T)}^{s_{+}(T)}u(x,T)-u_{+}dx-\int_{x_0}^{x^{+}}u_M-u_{+}dx \\
	&+\int_{0}^{T}f(s_{+}(t),u_l(s_{+}(t),t))-f(u_{+})dt \\
	&-\int_{0}^{T}\dfrac{f(s_{+}(t),u_l(s_{+}(t),t))-f(u_{+})}{u_l(s_{+}(t),t)-u_{+}}(u_l(s_{+}(t),t)-u_{+})dt \\
	&+\int_{0}^{T}f(u_{+})-f(u_{-})-f_{u}(y_{-}(t),u_{-})(u_{+}-u_{-})dt,
\end{split}
\]
but the integral along $s_{+}$ has integrand zero, and therefore
\[
\begin{split}
	0=&\int_{y_{-}(T)}^{s_{+}(T)}u(x,T)-u_{+}dx-\int_{x_0}^{x^{+}}u_M-u_{+}dx \\
	&+\int_{0}^{T}f(u_{+})-f(u_{-})-f_{u}(y_{-}(t),u_{-})(u_{+}-u_{-})dt.
\end{split}
\]
Hence by \ref{UC}, we obtain the following inequality for all $T$ such that $y_{-},s_{+}$ do not meet before $t=T$.
\[
0\leq\int_{y_{-}(T)}^{s_{+}(T)}u(x,T)-u_{+}dx\leq(u_M-u_{+})(x_{+}-x_0)-\frac{\alpha}{2}\abs{u_{+}-u_{-}}^2T.
\]
Suppose, if possible, that the curves $s_{+},y_{-}$ never meet, so that this inequality is valid for all $T>0$. However, for large enough $T$, the right-hand side term will be negative, which contradicts our assumption that the curves never meet. Note that if $x_{+}=x_0$, then there is nothing to prove. Hence, $y_{-}$ must meet $s_{+}$ in finite time. Similarly, $y_{+}$ must meet $s_{-}$ in finite time, and there is nothing to prove in this case if $x_{-}=x_0$. This concludes the proof.

\subsection{Counterexample with non-stationary solution}
For $v\in\mathbb{R}$ such that $f_x(\cdot,v)\neq0$, it is trivial to see that the constant function $v$ is not a solution of \eqref{claw}. Without assuming \eqref{S}, however, we cannot expect exactly one single shock to emerge in finite time as Theorem~\ref{SingleShock} states, under the same assumptions on the initial data. This is because constant initial data may give rise to shocks in positive time, not just non-trivial solutions. However, it would be interesting to consider more general cases of shock-free entropy solutions, e.g. smooth functions $u(x,t)=g(x)$ such that $f(x,g(x))=0$.

Let us illustrate this with an explicit example; let $u_0\equiv1/2$ be constant initial data, for the Cauchy problem \eqref{claw} with flux given by
\[
f(x,u)=\left(1+\exp\left(-x^2\right)\right)(u^2-u),
\] 
an instance of a heterogeneous LWR-type flux mentioned earlier. The characteristic equations \eqref{char ode} in this case are given by
\begin{equation}\label{char ode special}
	\begin{split}
		\dot{y}(t)&=\left(1+\exp\left(-y(t)^2\right)\right)(2z(t)-1), \\
		\dot{z}(t)&=2y(t)\exp\left(-y(t)^2\right)(z(t)^2-z(t)),
	\end{split}
\end{equation}
which must be solved for initial conditions $y(0)=x,z(0)=1/2$ for each $x\in\mathbb{R}$. Since the initial data is constant, a Lipschitz solution exists at least locally in time, and thus forward characteristics emanating from $t=0$ are unique for each $x$. By explicit computation, we can see that $(y(t),z(t))=(0,1/2)$ is a solution of \eqref{char ode special}. However, for $x>0$, we can explicitly calculate that
\[
\dot{z}(0)=2x\exp\left(-x^2\right)\dfrac{-1}{4},
\]
and hence at a small enough positive time $\epsilon$, we must have that $z(\epsilon)<1/2$ and $y(\epsilon)>0$. We claim that $y$ reaches zero in finite time. Note first that $z(t)\equiv0,1$ are solutions of the characteristic equations, since $u\equiv0,1$ are classical stationary solutions of \eqref{claw} with the given flux. Hence, by Cauchy-Lipschitz theory for ODEs, we have that $z(t)\in(0,1)$ for all $t$. Hence, as long as $y>0$, by \eqref{char ode special} we have that
\[
\dot{z}(t)=2y(t)\exp\left(-y(t)^2\right)(z(t)^2-z(t))<0,
\]
so that $z(t)\leq z(\epsilon)<1/2$ as long as $y(t)>0$. Since $1\leq1+\exp(-y^2)\leq2$ and $2z(t)-1\leq2z(\epsilon)-1<0$, we obtain the uniform bound
\[
\dot{y}(t)=\left(1+\exp\left(-y(t)^2\right)\right)(2z(t)-1)\leq2z(\epsilon)-1<0\quad\text{ as long as }y(t)>0.
\]
Therefore, $y(t^{\ast})=0$ at some finite time $t^{\ast}\leq\epsilon+y(\epsilon)/(1-2z(\epsilon))$, and it carries the value $z(t^*)\neq1/2$, forcing shock formation.

Hence, any characteristic emanating from $x>0,t=0$ cannot fail to meet the characteristic emanating from $x=t=0$ in finite time. A similar computation can be carried out, mutatis mutandis, for characteristics that emanate from $x<0,t=0$. In particular, the entropy solution cannot remain shock-free for all time, even though the initial data is as simple as possible.

This example can easily be augmented to demonstrate solutions exhibiting infinitely many shocks from constant initial data by \ref{FSP}. Let $\varphi\in C_c^{\infty}(\mathbb{R})$ be non-negative, even, supported in $(-1,1)$ such that $\varphi(x)=\exp(-x^2)$ for $x\in[-1/2,1/2]$ and is monotone decreasing in $[0,1]$. Now, for a spacing $D>2$ to be chosen later, consider the flux
\[
f(x,u)=\left(1+\sum_{k\in\mathbb{Z}}\varphi(x-Dk)\right)(u^2-u),
\]
which is smooth and satisfies \eqref{FSP}. Again, $u\equiv0,1$ are the stationary solutions of \eqref{claw} with this flux, and we can consider $u_0\equiv1/2$ to force shock formation on each interval of the form $(Dk-1,Dk+1)$ by translation invariance. Choosing $D$ large enough such that the intervals of non-homogeneity are sufficiently distant from one another that they behave independently, we obtain that the entropy solution of the Cauchy problem has infinitely many shocks.

\section{Proofs II}\label{proofs2}
We employ several lemmas to prove the theorem, after which the corollary follows quite easily. Consider \eqref{Het entropy}, the entropy equation that admissible solutions of \eqref{claw} satisfy in the sense of distributions. Since this equation is not in strict divergence form, we consider a family of entropies for which admissible solutions satisfy such an equation. In particular, we restrict ourselves to quadratic entropies.

\subsection{Stability of the simple shock}\label{L2 proof}
After the following lemma, the rest of our argument follows along the lines of Leger's proof in \cite{LegerShock} for the homogeneous case, with some technical changes to account for heterogeneous terms. While the core ideas are similar, we include full proofs of most lemmas for the sake of completeness. More interestingly, \ref{P} is a sharp assumption on the heterogeneity with respect to this generalisation; in section~\ref{neg het} we demonstrate that this result is false if the heterogeneity violates our assumption \ref{P}.
\begin{lemma}
	Suppose the flux $f$ satisfies the assumptions \ref{S}-\ref{FSP}{, as well as \eqref{P}.} For each $c \in [u^+, u^-]$, define the convex entropy $\eta(\cdot, c)$ by:
	\[
	\eta(u,c)=(u-c)^2.
	\]
	Define the associated fluxes $Q(x,u,c)$ as
	\[
	Q(x,u,c)=\int_{c}^{u}2(y-c)f_{u}(x,y)dy.
	\]
	{Then, if $u$ satisfies \eqref{Het entropy}, it also satisfies}
	\[
	\eta(u,c)_t+Q(x,u,c)_x\leq0.
	\]
\end{lemma}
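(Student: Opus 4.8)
The plan is to show that specialising to the quadratic entropy $\eta(u,c)=(u-c)^2$ forces the non-divergence remainder in \eqref{Het entropy} to have a definite sign, and that \eqref{P} makes this the favourable sign. Recall that, since $u$ satisfies \eqref{Het entropy}, we have
\[
\eta(u,c)_t+Q(x,u,c)_x+\eta^{\prime}(u,c)f_x(x,u)-Q_x(x,u,c)\leq0
\]
in the sense of distributions, where here $\eta^{\prime}$ and $Q_x$ denote the $u$- and explicit $x$-derivatives respectively. Hence it suffices to prove the pointwise bound $\eta^{\prime}(u,c)f_x(x,u)-Q_x(x,u,c)\geq0$: once the remainder is known to be nonnegative, dropping it from the left-hand side only decreases it, so the inequality $\eta(u,c)_t+Q(x,u,c)_x\leq0$ follows.

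First I would compute the remainder explicitly. Differentiating the definition of $Q$ under the integral sign gives $Q_x(x,u,c)=\int_c^u 2(y-c)f_{xu}(x,y)\,dy$, while $\eta^{\prime}(u,c)=2(u-c)$. Integrating $Q_x$ by parts in $y$, with $f_x(x,y)$ as an antiderivative of $f_{xu}(x,y)$, the boundary contribution is $2(u-c)f_x(x,u)$ (the term at $y=c$ vanishes), which cancels $\eta^{\prime}(u,c)f_x(x,u)$ exactly. What remains is the clean identity
\[
\eta^{\prime}(u,c)f_x(x,u)-Q_x(x,u,c)=2\int_c^u f_x(x,y)\,dy.
\]
This is the same computation underlying the counterexample of section~\ref{neg het}, only carried out under the opposite sign hypothesis.

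The remaining step is to establish $\int_c^u f_x(x,y)\,dy\geq0$. Here I would use \eqref{S} together with \eqref{P}: since $f_x(\cdot,u_\pm)\equiv0$ and $f_{xu}\geq0$ makes $f_x(x,\cdot)$ nondecreasing, the two endpoint values of $f_x(x,\cdot)$ on $[u_+,u_-]$ coincide and equal zero, forcing $f_x(x,c)=0$ for every $c\in[u_+,u_-]$ (the observation recorded in the remark following \eqref{P}). Combining $f_x(x,c)=0$ with monotonicity of $f_x(x,\cdot)$ then gives $f_x(x,y)\geq0$ for $y\geq c$ and $f_x(x,y)\leq0$ for $y\leq c$; in either of the cases $u\geq c$ or $u\leq c$ the integral $\int_c^u f_x(x,y)\,dy$ is therefore nonnegative. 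Substituting back into the displayed identity yields $\eta^{\prime}(u,c)f_x(x,u)-Q_x(x,u,c)\geq0$, which is what was needed.

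I do not expect a genuine obstacle: the lemma is essentially a sign-bookkeeping exercise once the integration by parts is performed. The one point requiring care is the domain of validity, namely ensuring that every $y$ appearing in $\int_c^u$ lies in the compact set on which \eqref{P} is assumed in Theorem~\ref{L2 stability}. Since $c\in[u_+,u_-]$ and $u\in[\operatorname{ess\text{ }inf}u,\operatorname{ess\text{ }sup}u]$, the entire segment between them is contained in that set, so the monotonicity of $f_x(x,\cdot)$ applies throughout the integral. It is worth emphasising that this is precisely the step where \eqref{P} is indispensable: reversing its sign reverses the sign of the remainder, which is exactly the mechanism producing $L^2$ growth in section~\ref{neg het}.
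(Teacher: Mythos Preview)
Your proof is correct and follows essentially the same route as the paper: reduce to showing $\eta'(u,c)f_x(x,u)-Q_x(x,u,c)\geq0$, differentiate $Q$ under the integral and integrate by parts to obtain $2\int_c^u f_x(x,y)\,dy$, then use \eqref{P} together with \eqref{S} to conclude that this integral is nonnegative. Your justification that $f_x(x,c)=0$ for all $c\in[u_+,u_-]$ via the squeeze between the two stationary points is exactly the observation the paper records in the remark after \eqref{P} and invokes at the same step.
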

\begin{proof}
	By \eqref{Het entropy}, it is enough for us to prove that
	\[
	\eta_u(u,c)f_x(x,u)-Q_x(x,u,c)\geq0.
	\]
	Note that both terms on the left-hand side are essentially bounded functions, so it is enough to prove the inequality pointwise in $x$. Furthermore, we suggestively denote the partial derivative of $\eta$ with respect to $u$ by $\eta^{\prime}$, thus $2(u-c)=\eta^{\prime}(u,c)$. By differentiating under the integral sign, we can write $Q_x$ as
	\[
	\begin{split}
		-Q_x(x,u,c)&=-\pdv{}{x}\int_{c}^{u}2(y-c)f_u(x,y)dy \\
		&=-\int_{c}^{u}\eta^{\prime}(y,c)f_{xu}(x,y)dy \\
		&=\int_{c}^{u}\eta^{\prime\prime}(y,c)f_x(x,y)dy-\eta^{\prime}(u,c)f_x(x,u) \\
		&=-\eta^{\prime}(u,c)f_x(x,u)+2\int_{c}^{u}f_x(x,y)dy.
	\end{split}
	\]
	Hence, we have that
	\[
	\eta^{\prime}(u,c)f_x(x,u)-Q_x(x,u,c)=2\int_{c}^{u}f_x(x,y)dy\geq0,
	\]
	where the last inequality holds because \ref{P} implies that $f_x$ is an increasing function of its second argument and vanishes at $c$. Therefore, we have that
	\[
	\eta^{\prime}(u,c)f_x(x,u)-Q_x(x,u,c)\geq0,
	\]
	which completes the proof.
\end{proof}

Now, from the entropy inequality, and Dafermos' `divergence theorem' for scalar conservation laws \cite{GenChar}, the lemma below, which we present without proof, trivially follows. Recall that by the correspondence with Hamilton-Jacobi equations, left and right traces in $x$ are well defined everywhere for entropy solutions of \eqref{claw}.
\begin{lemma}\label{divergence theorem}
	Let $0\leq a<b;\xi_1,\xi_2:[a,b]\to\mathbb{R}$ such that $\xi_i$ are Lipschitz and $\xi_1-\xi_2\leq -\delta<0$. Then, for each fixed $c\in[u_{+},u_{-}]$, we have that
	\[
	\begin{split}
		&\int_{\xi_1(b)}^{\xi_2(b)}\eta(u(x,b),c)dx-\int_{\xi_1(a)}^{\xi_2(a)}\eta(u(x,a),c)dx \\
		\leq&\int_{a}^{b}\eta(u(\xi_2(t)-,t),c)\dot{\xi}_2(t)-Q(\xi_2(t),u(\xi_2(t)-,t),c)dt \\
		&-\int_{a}^{b}\eta(u(\xi_1(t)+,t),c)\dot{\xi}_1(t)-Q(\xi_1(t),u(\xi_1(t)+,t),c)dt.
	\end{split}
	\]
\end{lemma}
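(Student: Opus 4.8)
The plan is to integrate the divergence-form entropy inequality supplied by the preceding lemma over the space-time region bounded by the two curves and the two time levels, and then simply read off the boundary contributions. Fix $c\in[u_+,u_-]$ and set $\Gamma=\{(x,t):a\leq t\leq b,\ \xi_1(t)\leq x\leq\xi_2(t)\}$; the hypothesis $\xi_1-\xi_2\leq-\delta<0$ guarantees that $\Gamma$ is a nondegenerate Lipschitz domain whose left lateral boundary is the graph of $\xi_1$ and whose right lateral boundary is the graph of $\xi_2$. Viewing the pair $(Q,\eta)$ as a space-time vector field, the inequality $\eta_t+Q_x\leq0$ says precisely that this field has nonpositive distributional divergence, so that the net \emph{outward} flux across $\partial\Gamma$ is nonpositive. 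Everything then reduces to computing that outward flux over the four boundary pieces.

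The four pieces contribute as follows. On the top slice $t=b$ the outward normal is $(0,1)$, giving $+\int_{\xi_1(b)}^{\xi_2(b)}\eta(u(x,b),c)\,dx$, while on the bottom slice $t=a$ the normal is $(0,-1)$, giving $-\int_{\xi_1(a)}^{\xi_2(a)}\eta(u(x,a),c)\,dx$. On the right curve $x=\xi_2(t)$, parametrised by $t$ with tangent $(\dot\xi_2,1)$, the outward normal is proportional to $(1,-\dot\xi_2)$, so the flux element is $\bigl(Q-\eta\,\dot\xi_2\bigr)\,dt$ evaluated at the \emph{left} trace $u(\xi_2(t)-,t)$, since $\Gamma$ lies to the left of this curve. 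Symmetrically, on the left curve $x=\xi_1(t)$ the outward normal is proportional to $(-1,\dot\xi_1)$ and the flux element is $\bigl(\eta\,\dot\xi_1-Q\bigr)\,dt$ taken at the \emph{right} trace $u(\xi_1(t)+,t)$. Summing the four contributions, imposing that the total is $\leq0$, and moving the two lateral integrals to the right-hand side reproduces verbatim the claimed inequality; the orientation conventions are exactly what make the left trace appear on $\xi_2$ and the right trace on $\xi_1$.

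The only genuinely technical point is justifying the divergence theorem for a merely Lipschitz boundary and for a distributional (indeed measure) inequality with an $L^\infty$ integrand. I would handle this by approximation: test the weak form $\iint(\eta\,\varphi_t+Q\,\varphi_x)\,dx\,dt+\int\eta(u_0)\varphi(x,0)\,dx\geq0$ against a sequence $\varphi_k\to\mathbf{1}_\Gamma$ constructed as a product of a temporal cutoff concentrating near $t=a,b$ and a spatial cutoff interpolating between the graphs of $\xi_1$ and $\xi_2$. As the cutoffs sharpen, $\varphi_{k,t}$ and $\varphi_{k,x}$ concentrate on $\partial\Gamma$: the temporal derivative yields the two horizontal slice integrals (with $u(x,a)$ understood as $u_0$ when $a=0$, via the initial-data term), and the spatial derivative, combined with the chain rule along the moving boundaries, yields the lateral integrals carrying the correct one-sided traces. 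The existence of these left and right traces for a.e. $t$ is exactly what the Hamilton--Jacobi correspondence and semiconcavity provide, and the Lipschitz regularity of $\xi_1,\xi_2$ ensures $\dot\xi_i\in L^\infty$, so each integral is finite. This is precisely Dafermos' divergence formula \cite{GenChar} adapted to the heterogeneous entropy flux $Q(x,u,c)$, and I expect the main obstacle to be the careful bookkeeping of orientations and one-sided traces rather than any deeper analytic difficulty.
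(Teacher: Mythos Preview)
Your proposal is correct and follows precisely the approach the paper indicates: the paper actually presents this lemma \emph{without proof}, stating only that it ``trivially follows'' from the preceding divergence-form entropy inequality $\eta_t+Q_x\leq0$ together with Dafermos' divergence theorem \cite{GenChar}. Your write-up is therefore a faithful (and more detailed) unpacking of exactly what the authors had in mind, including the correct bookkeeping of one-sided traces and orientations.
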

Furthermore, when the initial data is an integrable perturbation of the special Riemann data, we have the following lemma. Since the proof is a trivial adaptation of the argument given by Leger in \cite{LegerShock}, we omit it.
\begin{lemma}\label{Leger overkill}
	Let $u$ be the solution to the Cauchy problem \eqref{claw} with initial data $u_0\in L^{\infty}(\mathbb{R})$ such that $\pnorm{2}{u_0-\Phi}<\infty$, where $\Phi$ is the special Riemann data as defined as in \eqref{Riemann}. Then, for any Lipschitz curve $\xi:[a,b]\to\mathbb{R}$, we have the following two inequalities:
	\[
	\begin{split}
		&\int_{-\infty}^{\xi(b)}\eta(u(x,b),c)dx-\int_{-\infty}^{\xi(a)}\eta(u(x,a),c)dx \\
		\leq&\int_{a}^{b}\eta(u(\xi(t)-,t),c)\dot{\xi}(t)-Q(\xi(t),u(\xi(t)-,t),c)dt,
	\end{split}
	\]
	and
	\[
	\begin{split}
		&\int_{\xi(b)}^{\infty}\eta(u(x,b),c)dx-\int_{\xi(a)}^{\infty}\eta(u(x,a),c)dx \\
		\leq&-\int_{a}^{b}\eta(u(\xi(t)+,t),c)\dot{\xi}(t)-Q(\xi(t),u(\xi(t)+,t),c)dt.
	\end{split}
	\]
\end{lemma}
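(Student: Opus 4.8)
The plan is to deduce both inequalities from the finite-domain statement of Lemma~\ref{divergence theorem} by truncating space at $x=\mp R$ and letting $R\to\infty$. By symmetry I would prove only the first (left half-line) inequality in detail, the second following \emph{mutatis mutandis}. Fix a large $R>0$ and apply Lemma~\ref{divergence theorem} on the region bounded on the right by $\xi$ and on the left by the vertical segment $\xi_1\equiv-R$, so that $\dot\xi_1\equiv0$; this is legitimate once $-R<\xi(t)$ on $[a,b]$, which holds for $R$ large since the Lipschitz curve $\xi$ is bounded. The resulting inequality coincides with the claimed one apart from a single extra boundary contribution from the segment $x=-R$, namely the term $\int_a^b Q(-R,u(-R+,t),c)\,dt$ added to the right-hand side.

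Two passages to the limit must then be justified. First, the left-hand side $\int_{-R}^{\xi(b)}\eta(u(\cdot,b),c)\,dx-\int_{-R}^{\xi(a)}\eta(u(\cdot,a),c)\,dx$ should converge to the stated difference of improper integrals. This is exactly where the hypothesis $\pnorm{2}{u_0-\Phi}<\infty$ is used: propagating this bound in time (together with \ref{FSP}, which allows us to work locally) keeps $u(\cdot,t)-u_-$ in $L^2$ near $-\infty$, uniformly for $t\in[a,b]$, so that the integrals of $\eta(u,u_-)=(u-u_-)^2$ converge. Second, the spurious term $\int_a^b Q(-R,u(-R+,t),c)\,dt$ must vanish. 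Here I would invoke the remark following \ref{P}: under \ref{S} and \ref{P} the flux is $x$-independent on $[u_+,u_-]$, whence $Q(\cdot,u,c)$ is independent of $x$ for $u,c\in[u_+,u_-]$ and the term tends to $(b-a)\,Q(u_-,c)$. Choosing the reference state to match the far field, $c=u_-$, makes this limit $Q(u_-,u_-)=0$.

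The step I expect to be the main obstacle is not the formal value of the far-field term but its quantitative vanishing as $R\to\infty$. Since $Q(x,\cdot,u_-)$ vanishes to second order at $u_-$, one has $\abs{Q(-R,u(-R+,t),u_-)}\leq C\,(u(-R+,t)-u_-)^2$, and $\int_a^b(u(-R+,t)-u_-)^2\,dt\to0$ along a suitable sequence $R_n\to\infty$ because $\int_{-\infty}^0(u(x,t)-u_-)^2\,dx$ is finite uniformly in $t$; a dominated-convergence argument in $t$ then closes the limit. The identical device at $+\infty$, now with reference $c=u_+$, with the right trace $u(\xi+,t)$ along $\xi$, and with the truncating segment placed at $x=+R$, yields the second inequality. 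I would stress that matching the reference to the far-field stationary state is essential: for a generic $c\in[u_+,u_-]$ the two improper integrals on the left-hand side do not converge individually, and a residual $(b-a)Q(u_-,c)$ survives; it is precisely the admissible choice $c=u_-$ (respectively $c=u_+$), which is exactly the reference dictated by the form of $\pnorm{2}{u(\cdot,t)-\Phi(\cdot-\overline{\xi}(t)-\sigma t)}^2$ in Theorem~\ref{L2 stability}, that renders the inequalities both meaningful and sharp in the far field.
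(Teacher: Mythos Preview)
The paper does not prove this lemma at all: it states that ``the proof is a trivial adaptation of the argument given by Leger in \cite{LegerShock}'' and omits it. Your truncate-and-pass-to-the-limit scheme is precisely Leger's argument, so you are aligned with what the paper intends.

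Two remarks on execution. First, your observation that the inequalities are only meaningful with $c=u_-$ (left half-line) and $c=u_+$ (right half-line) is correct and sharper than the paper's statement, which leaves $c$ implicit; the lemma is in fact only invoked with these choices. Second, your argument that the boundary term at $x=-R$ vanishes via a subsequence relies on already knowing $u(\cdot,t)-u_-\in L^2$ near $-\infty$ for $t>0$, which is part of what you are trying to establish; the detour through ``$Q$ is $x$-independent on $[u_+,u_-]$'' does not help either, since $u(-R+,t)$ need not lie in that interval. The clean fix, which is what Leger does, is to truncate not by a vertical line but by a slanted one $\xi_1(t)=-R-L(t-a)$ with $L$ at least the propagation speed from \ref{FSP}: since $\lvert Q(x,v,u_-)\rvert\leq L\,\eta(v,u_-)$ for $\lvert v\rvert\leq\lVert u\rVert_\infty$, the entire left-boundary contribution is nonpositive and can simply be dropped before taking $R\to\infty$. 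Applied first with $a=0$ this also yields the finiteness of $\int_{-\infty}^{\xi(t)}\eta(u(\cdot,t),u_-)\,dx$ for all $t>0$, after which the general $a>0$ case follows without circularity.
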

Now, suppose we have two Lipschitz curves $\xi_{\pm}:[0,\infty)\to\mathbb{R}$ such that $\xi_{\pm}(0)=0$ and $\xi_{+}\leq\xi_{-}$ for all time. Then, by Lemma~\ref{Leger overkill} we obtain following inequality: for any curve $\tilde{\xi}$ such that $\xi_{+}\leq\tilde{\xi}\leq\xi_{-}$ for all time, we have that for all $T>0$:
\[
\begin{split}
	\pnorm{2}{u(\cdot,T)-\Phi(\cdot-\tilde{\xi}(T))}^2=&\int_{-\infty}^{\tilde{\xi}(T)}\eta(u(x,T),u_{-})dx+\int_{\tilde{\xi}(T)}^{\infty}\eta(u(x,T),u_{+})dx \\
	\leq&\int_{-\infty}^{\xi_{-}(T)}\eta(u(x,T),u_{-})dx+\int_{\xi_{+}(T)}^{\infty}\eta(u(x,T),u_{+})dx \\
	\leq&\int_{-\infty}^0\eta(u_0(x),u_{-})dx+\int_{0}^{\infty}\eta(u_0(x),u_{+})dx \\
	&+\int_{0}^{T}\eta(u(\xi_{-}(t)-,t),u_{-})\dot{\xi}_{-}(t)-Q(\xi_{-}(t),u(\xi_{-}(t)-,t),u_{-})dt \\
	&-\int_{0}^{T}\eta(u(\xi_{+}(t)+,t),u_+)\dot{\xi}_{+}(t)-Q(\xi_{+}(t),u(\xi_{+}(t)+,t),u_{+})dt,
\end{split}
\]
and hence
\[
\begin{split}
	\pnorm{2}{u(\cdot,T)-\Phi(\cdot-\tilde{\xi}(T))}^2\leq&\pnorm{2}{u_0-\Phi}^2 \\
	&+\int_{0}^{T}\eta(u(\xi_{-}(t)-,t),u_{-})\dot{\xi}_{-}(t)-Q(\xi_{-}(t),u(\xi_{-}(t)-,t),u_{-})dt \\
	&-\int_{0}^{T}\eta(u(\xi_{+}(t)+,t),u_+)\dot{\xi}_{+}(t)-Q(\xi_{+}(t),u(\xi_{+}(t)+,t),u_{+})dt.
\end{split}
\]
Thus, it is enough for us to find Lipschitz functions $\xi_{+}(t),\xi_{-}(t)$, that respectively satisfy the inequalities
\[
\eta(u(\xi_{+}(t)+,t),u_+)\dot{\xi}_{+}(t)-Q(\xi_{+}(t),u(\xi_{+}(t)+,t),u_{+})\geq0,
\]
and
\[
\eta(u(\xi_{-}(t)-,t),u_{-})\dot{\xi}_{-}(t)-Q(\xi_{-}(t),u(\xi_{-}(t)-,t),u_{-})\leq0,
\]
such that $\xi_{\pm}(0)=0$ and $\xi_{+}\leq\xi_{-}$ for all time. To construct these curves, let us define the `normalised' entropy flux $q(x,u,c)$ as follows for $c\in[u_{+},u_{-}],u\neq c$:
\begin{equation}\label{monotono q}
	q(x,u,c)=\dfrac{Q(x,u,c)}{\eta(u,c)}=\int_{c}^{u}\dfrac{\eta^{\prime}(y,c)}{\eta(u,c)}f_u(x,y)dy=\int_{0}^{1}2zf_u(x,c+z(u-c))dz,
\end{equation}
where the second equality follows from the change of variables $z=(y-c)/(u-c)$. In this latter version, it is clear that $q$ is continuously differentiable in all variables. Furthermore, by \ref{UC}, we have that for all $x,u\in\mathbb{R},c\in[u_{+},u_{-}]$:
\begin{equation}\label{c growth}
	\begin{split}
		\pdv{q}{u}(x,u,c)&\geq\dfrac{2\alpha}{3}, \\
		\pdv{q}{c}(x,u,c)&\geq\dfrac{\alpha}{3}.
	\end{split}
\end{equation}
Hence, to complete the proof, it is sufficient to find Lipschitz curves $\xi_{\pm}$ with $\xi_{\pm}(0)=0,\xi_{+}-\xi_{-}\leq0$ such that for almost all $t$, we have that
\begin{equation}\label{pm curves}
	\begin{split}
		\dot{\xi}_{-}(t)&\leq q(\xi_{-}(t),u(\xi_{-}(t)-,t),u_{-}), \\
		\dot{\xi}_{+}(t)&\geq q(\xi_{+}(t),u(\xi_{+}(t)+,t),u_{+}).
	\end{split}
\end{equation}
Thus, motivated by Dafermos' generalised characteristics, which emerge as solutions to ODEs in the sense of Filippov \cite{Filippov}, we consider Lipschitz solutions of the differential inclusion
\begin{equation}\label{diffclusion}
	\dot{\xi}(t)\in[q(\xi(t),u(\xi(t)+,t),c),q(\xi(t),u(\xi(t)-,t),c)].
\end{equation}
Note that this inclusion is well defined due to \eqref{c growth}, since $u$ is assumed to be an entropy solution, satisfying $u(x+,t)<u(x-,t)$ at points of spatial discontinuity. Thus, given that $q$ is strictly increasing in $u$, the existence and uniqueness (for positive times) of solutions to these differential inclusions follows from {\cite[Proposition 3, appendix]{LegerShock}}. Furthermore, a lemma from Dafermos {\cite[Lemma 3.1]{GenChar}} restricts their behaviour at points of discontinuity; in particular, solutions of \eqref{diffclusion} must satisfy the Rankine-Hugoniot conditions along discontinuities.
\begin{lemma}
	Let $\xi:[t_0,T]\to\mathbb{R}$ be a Lipschitz solution to \eqref{diffclusion} for some $t_0\geq0$. Then, for almost all $t\in[t_0,T]$, we have that
	\[
	\dot{\xi}(t)=
	\begin{cases}
		&q(\xi(t),u(\xi(t),t),c)\text{ if }u(\xi(t)-,t)=u(\xi(t)+,t), \\
		\smallskip
		&\dfrac{f(\xi(t),u(\xi(t)-,t))-f(\xi(t),u(\xi(t)+,t))}{u(\xi(t)-,t)-u(\xi(t)+,t)}\text{ if }u(\xi(t)-,t)>u(\xi(t)+,t).
	\end{cases}
	\]
\end{lemma}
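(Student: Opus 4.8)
The plan is to treat the two alternatives of the dichotomy separately, the first being immediate and the second reducing to the structure of the shock set of the entropy solution $u$. Since $\xi$ is Lipschitz, $\dot{\xi}(t)$ exists for almost every $t\in[t_0,T]$, so it suffices to identify its value at such points. Fix a point $t$ of differentiability. If $u(\xi(t)-,t)=u(\xi(t)+,t)$, then the interval on the right-hand side of \eqref{diffclusion} degenerates to the single point $q(\xi(t),u(\xi(t),t),c)$, and $\dot{\xi}(t)$ is forced to equal it; this disposes of the first branch with no further work. I emphasise that at a genuine jump the interval is \emph{non-degenerate}, since $q$ is strictly increasing in its second argument by \eqref{c growth}, so the inclusion alone does not pin down $\dot{\xi}$ there and a separate argument is required for the second branch.

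For that second branch I would set $J=\{t\in[t_0,T]:u(\xi(t)-,t)>u(\xi(t)+,t)\}$, the set of times at which $\xi$ sits on a downward jump of $u$, and show that $\dot{\xi}(t)$ equals the Rankine--Hugoniot speed for almost every $t\in J$. The key structural input, taken from Dafermos' theory \cite{GenChar}, is that the discontinuity set of $u$ is contained in a countable family of Lipschitz shock curves $\{y_i\}_{i\in\mathbb{N}}$, each of which is a forward characteristic through discontinuities and hence satisfies $\dot{y}_i(t)=\frac{f(y_i(t),u(y_i(t)-,t))-f(y_i(t),u(y_i(t)+,t))}{u(y_i(t)-,t)-u(y_i(t)+,t)}$ almost everywhere. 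For every $t\in J$ the point $(\xi(t),t)$ lies on the graph of some $y_i$, so writing $E_i=\{t\in J:\xi(t)=y_i(t)\}$ gives a countable measurable decomposition $J=\bigcup_i E_i$, each $E_i$ being measurable because $t\mapsto\xi(t)-y_i(t)$ is continuous. On each $E_i$ the Lipschitz functions $\xi$ and $y_i$ coincide, whence $\dot{\xi}=\dot{y}_i$ almost everywhere on $E_i$; summing over $i$ gives that $\dot{\xi}$ equals the shock speed almost everywhere on $J$, which is precisely the second branch of the claimed formula.

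The step I expect to carry the weight is the structural fact that the jump set of $u$ is exhausted by countably many Lipschitz Rankine--Hugoniot curves; once it is granted, the coincidence $\xi=y_i$ on $E_i$ is automatic from set membership, and the remainder rests only on the elementary observation that two Lipschitz functions have equal derivatives almost everywhere on any set where they agree. For the convex, genuinely nonlinear law \eqref{claw} this rectifiability is standard and follows from Dafermos' analysis of genuine characteristics (which meet only at their endpoints) together with the forward-uniqueness and persistence of shocks, so I would simply invoke it rather than reprove it. As a consistency check, the argument incidentally shows that the Rankine--Hugoniot speed must lie inside $[q(\xi(t),u(\xi(t)+,t),c),q(\xi(t),u(\xi(t)-,t),c)]$, since it is realised as $\dot{\xi}(t)$ for a solution of \eqref{diffclusion}; this is consistent with that speed and the two endpoints all being averages of the increasing function $f_u$ over overlapping intervals. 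The whole proof mirrors the corresponding lemma for the $f_u$-inclusion \eqref{DafDiff} recalled above, as well as the analogous step in Leger \cite{LegerShock}, the only genuine change being the replacement of $f_u$ by $q$ in the continuity branch.
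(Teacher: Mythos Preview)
Your argument is correct. The paper does not actually prove this lemma; it simply invokes \cite[Lemma 3.1]{GenChar} in the sentence preceding the statement, so you have supplied a proof where the authors only give a citation. Your treatment of the continuity branch is exactly right and requires no comment.

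For the discontinuity branch, your route via the countable shock-curve decomposition of the jump set, together with the elementary fact that two Lipschitz functions have equal derivatives almost everywhere on their coincidence set, is a clean and valid way to obtain the Rankine--Hugoniot speed. Dafermos' original argument in \cite{GenChar} is somewhat more local: he works directly with the weak formulation of the conservation law near a point of discontinuity on the curve, rather than first exhausting the global jump set by a countable family. Your approach trades that local PDE computation for a structural input (rectifiability of the shock set and persistence/uniqueness of forward shocks), which you correctly flag as the load-bearing step and which is indeed available from \cite{GenChar} for convex fluxes. Either approach works here; yours has the advantage that the replacement of $f_u$ by $q$ in the inclusion is entirely transparent, since the shock-speed identification on $J$ never refers to $q$ at all. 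Your closing consistency check---that the Rankine--Hugoniot speed lies in the $q$-interval---is a nice observation but not needed for the proof.
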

\begin{proof}
    For a.e. $t\in[t_0,T]$, the derivative $\dot{\xi}$ exists. If $u(\xi(t)-,t)=u(\xi(t)+,t)$, then the interval in \eqref{diffclusion} trivially reduces to a single point, which is precisely the first case. For the second case, let $a,b\in\mathbb{R}$ such that for all $t\in[t_0,T]$ we have that $a<\xi(t)<b$. Now for any times $t_0\leq\tau_1<\tau_2\leq T$, apply the divergence theorem to three regions:
    \begin{enumerate}
        \item The rectangular domain $[a,b]\times[\tau_1,\tau_2]$.
        \item The left portion $\left\{(x,t):\tau_1\leq t\leq \tau_2,a\leq x\leq\xi(t)\right\}$.
        \item The right portion $\left\{(x,t):\tau_1\leq t\leq \tau_2,\xi(t)\leq x\leq b\right\}$.
    \end{enumerate}
    Since the integrals over the boundaries of the rectangular domain sum to zero by Dafermos' `divergence theorem' for scalar conservation laws \cite{GenChar}, we conclude that
    \[
	\int_{\tau_1}^{\tau_2}\dot{\xi}(t)\left[u(\xi(t)-,t)-u(\xi(t)+,t)\right]-\left[f(\xi(t),u(\xi(t)-,t))-f(\xi(t),u(\xi(t)+,t))\right]dt=0.
	\]
    Since $\tau_1<\tau_2$ were arbitrary times in the domain $[t_0,T]$, it follows that the integrand vanishes for almost all $t\in[t_0,T]$. Hence for almost all $t\in[t_0,T]$ such that $u(\xi(t)-,t)>u(\xi(t)+,t)$, the Rankine-Hugoniot condition must hold.
\end{proof}
Thus, let $\xi_{+},\xi_{-}$ respectively, with $\xi_{\pm}(0)=0$, be solutions of the following differential inclusions:
\begin{equation}\label{final step}
	\begin{split}
		\dot{\xi}_{+}(t)&\in[q(\xi_{+}(t),u(\xi_{+}(t)+,t),u_+),q(\xi_{+}(t),u(\xi_{+}(t)-,t),u_+)], \\
		\dot{\xi}_{-}(t)&\in[q(\xi_{-}(t),u(\xi_{-}(t)+,t),u_-),q(\xi_{-}(t),u(\xi_{-}(t)-,t),u_-)].
	\end{split}
\end{equation}
These curves certainly satisfy the inequalities \eqref{pm curves}, and thus the proof will be complete if we can show that $\xi_{+}-\xi_{-}\leq0$ for all time. In order to do so, we will employ the following lemmas and prove the required result by contradiction.
\begin{lemma}\label{lambda}
	Let $\xi_{\pm}$ be solutions to the differential inclusion \eqref{diffclusion} on the interval $[t_0,t_1]\subset[0,\infty)$ such that $\forall t\in[t_0,t_1]:\xi_{+}(t)-\xi_{-}(t)\geq\delta>0$. Let $S_{a,b}=\{t\in[t_0,t_1]:\dot{\xi}_{+}(t)\geq\dot{\xi}_{-}(t)\}$. Then, for $t_0\leq a<b\leq t_1$ and $c\in(u_{+},u_{-})$, there exists $\lambda>0$ independent of $\delta$ such that
	\[\int_{\xi_{-}(b)}^{\xi_{+}(b)}\eta(u(x,b),c)dx-\int_{\xi_{-}(a)}^{\xi_{+}(a)}\eta(u(x,a),c)dx\leq-\lambda\abs{S_{a,b}}.\]
\end{lemma}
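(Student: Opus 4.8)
The plan is to apply the divergence theorem, Lemma~\ref{divergence theorem}, to the region bounded by $\xi_-$ below and $\xi_+$ above on $[a,b]$, and then to show that the resulting boundary integrals are controlled from above by a strictly negative multiple of $\abs{S_{a,b}}$. Since $\xi_+ - \xi_- \geq \delta > 0$ throughout, the hypotheses of Lemma~\ref{divergence theorem} are met (with the roles of $\xi_1,\xi_2$ set to $\xi_-,\xi_+$), so
\[
\int_{\xi_{-}(b)}^{\xi_{+}(b)}\eta(u(x,b),c)\,dx-\int_{\xi_{-}(a)}^{\xi_{+}(a)}\eta(u(x,a),c)\,dx
\leq \int_a^b \Big[ R_+(t) - R_-(t) \Big]\,dt,
\]
where $R_\pm(t) = \eta(u(\xi_\pm(t)\mp,t),c)\dot{\xi}_\pm(t) - Q(\xi_\pm(t),u(\xi_\pm(t)\mp,t),c)$ are the boundary contributions along $\xi_+$ (upper, inner trace) and $\xi_-$ (lower, inner trace). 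The goal is then to bound the integrand $R_+(t)-R_-(t)$ pointwise.

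The key step is to rewrite each boundary term using the normalised flux $q$ from \eqref{monotono q}. Factoring $\eta$ out, we have $R_\pm(t) = \eta(u(\xi_\pm(t)\mp,t),c)\big[\dot{\xi}_\pm(t) - q(\xi_\pm(t),u(\xi_\pm(t)\mp,t),c)\big]$. Now I would invoke the Rankine--Hugoniot characterisation of the previous lemma: at a.e. $t$, either $\xi_\pm$ passes through a point of continuity, where $\dot{\xi}_\pm = q$ and the bracket vanishes, or $\xi_\pm$ rides a shock, where $\dot{\xi}_\pm$ equals the Rankine--Hugoniot speed. The decisive observation is that \eqref{c growth} gives strict monotonicity $\partial q/\partial u \geq 2\alpha/3$, so at a shock the secant (Rankine--Hugoniot) value of $\dot{\xi}_\pm$ lies strictly between $q$ evaluated at the two traces; hence the bracket for $\xi_+$ (using the inner trace $u(\xi_+ -,t)$) is $\leq 0$ while the bracket for $\xi_-$ (using the inner trace $u(\xi_- +,t)$) is $\geq 0$. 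On $S_{a,b}$, where $\dot{\xi}_+ \geq \dot{\xi}_-$, I expect to extract a quantitative gap: the strict convexity constant $\alpha$ forces a definite spread between the secant speed and the endpoint $q$-values, and this produces a uniform $\lambda > 0$, independent of $\delta$, with $R_+(t)-R_-(t) \leq -\lambda$ for a.e. $t \in S_{a,b}$ and $R_+(t)-R_-(t)\leq 0$ otherwise. Integrating gives the claim.

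The main obstacle is making the lower bound $\lambda$ genuinely \emph{uniform} and independent of $\delta$, since the set $S_{a,b}$ records precisely the times where the two shifts are moving towards each other and the contraction mechanism is active. I would track the dependence carefully: on $S_{a,b}$ one of the two curves must be a genuine shock (otherwise both travel at the continuity speed $q$ and, by monotonicity of $q$ in its spatial and $u$ arguments together with $\xi_+ > \xi_-$, one cannot have $\dot{\xi}_+ \geq \dot{\xi}_-$ without a strict jump), and the shock amplitude together with \eqref{c growth} and the global essential bounds on $u$ furnishes a constant depending only on $\alpha$, $\supnorm{u}$, and $u_\pm$ — crucially not on $\delta$. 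The subtlety is that the jump amplitude could in principle degenerate; I would rule this out by noting that if the amplitude shrinks to zero the bracket vanishes but so does the constraint $\dot{\xi}_+ \geq \dot{\xi}_-$ force a contribution, so the worst case is controlled by a compactness/continuity argument on the $q$-differences over the compact range of admissible trace values.
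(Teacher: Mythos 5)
Your opening step is the paper's (apply Lemma~\ref{divergence theorem} with $\xi_1=\xi_-$, $\xi_2=\xi_+$ and factor each boundary term as $\eta\cdot[\dot\xi_\pm-q]$), but the mechanism you then propose for negativity is based on a misreading that breaks the proof. You treat both curves as solving \eqref{diffclusion} with the \emph{same} parameter $c$ that appears in the entropy $\eta(\cdot,c)$, so that the bracket vanishes at continuity points and negativity must be dug out of shock structure. In the lemma as actually used, $\xi_+$ and $\xi_-$ solve \eqref{final step}, i.e. the inclusions with parameters $u_+$ and $u_-$ respectively, while $c$ is a \emph{third} value strictly between them; this three-parameter structure is the whole point. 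Under your reading the statement is simply false: take the flux homogeneous (so \eqref{P} holds trivially) and $u\equiv v$ constant near both curves with $v\in(u_+,u_-)$, $v\neq c$; then both curves travel at the same speed $q(v,c)$, so $S_{a,b}=[a,b]$, while the left-hand side equals $\eta(v,c)\left[(\xi_+(b)-\xi_-(b))-(\xi_+(a)-\xi_-(a))\right]=0$, contradicting the bound $\leq-\lambda\abs{S_{a,b}}$. Two further steps of yours fail on their own terms: the parenthetical claim that on $S_{a,b}$ one curve must be a shock gets the monotonicity backwards ($q_x\geq0$, $q_u>0$ and $\xi_+>\xi_-$ make $\dot\xi_+\geq\dot\xi_-$ \emph{easier}, not impossible, at continuity points --- equal traces already achieve it); and even at a shock your boundary term carries the factor $\eta(u(\xi_\pm(t)\mp,t),c)$, which can vanish when the inner trace equals $c$, so no pointwise bound $R_+-R_-\leq-\lambda$ can come from shock amplitude alone, and the closing compactness appeal cannot repair this.

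The correct mechanism is different. Because the inclusion parameters $u_\pm$ differ from $c$, the monotonicity \eqref{c growth} of $q$ in $c$ makes the brackets uniformly signed at a.e.\ $t$, shock or no shock: $\dot\xi_+-q(\xi_+,u(\xi_+(t)-,t),c)\leq-\tfrac{\alpha}{3}(c-u_+)<0$ and $\dot\xi_--q(\xi_-,u(\xi_-(t)+,t),c)\geq\tfrac{\alpha}{3}(u_--c)>0$. The genuine difficulty --- which your proposal never addresses --- is that these brackets are multiplied by $\eta$-factors that may vanish, so one must show that on $S_{a,b}$ at least one of $\eta(u(\xi_+(t)-,t),c)$, $\eta(u(\xi_-(t)+,t),c)$ is bounded below by a constant independent of $\delta$. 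The paper gets this from the chain $q(\xi_+,u(\xi_+(t)-,t),u_+)\geq\dot\xi_+\geq\dot\xi_-\geq q(\xi_-,u(\xi_-(t)+,t),u_-)$, valid on $S_{a,b}$ by \eqref{final step}, which combined with $q_c\geq\alpha/3$, the upper bound $q_u\leq M$, and control of the $x$-dependence of $q$ (this is where \eqref{P} enters; together with \eqref{S} it forces $f_{xu}\equiv0$ on $\mathbb{R}\times[u_+,u_-]$, which is what neutralises the spatial shift between $\xi_-$ and $\xi_+$) yields the trace separation $u(\xi_+(t)-,t)-u(\xi_-(t)+,t)\geq\tfrac{\alpha}{3M}(u_--u_+)$. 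Hence one trace is at distance at least $\tfrac{\alpha}{6M}(u_--u_+)$ from $c$, the corresponding $\eta$-factor is at least $\tfrac{\alpha^2}{36M^2}(u_--u_+)^2$, and multiplying by the uniformly signed brackets gives an integrand $\leq-\lambda$ on $S_{a,b}$ and $\leq0$ elsewhere. Since this entire mechanism --- the distinct parameters $u_+,c,u_-$, the uniform sign of the brackets, and the trace-separation estimate --- is absent from your argument, the proposal does not prove the lemma.
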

\begin{proof}
	From Lemma~\ref{divergence theorem}, we have that
	\[
	\begin{split}
		&\int_{\xi_{-}(b)}^{\xi_{+}(b)}\eta(u(x,b),c)dx-\int_{\xi_{-}(a)}^{\xi_{+}(a)}\eta(u(x,a),c)dx \\
		\leq&-\int_{a}^{b}\eta(u(\xi_{-}(t)+,t),c)\dot{\xi}_{-}(t)-Q(\xi_{-}(t),u(\xi_{-}(t)+,t),c)dt \\
		&+\int_{a}^{b}\eta(u(\xi_{+}(t)-,t),c)\dot{\xi}_{+}(t)-Q(\xi_{+}(t),u(\xi_{+}(t)-,t),c)dt \\
		=&-\int_{a}^{b}\eta(u(\xi_{-}(t)+,t),c)[\dot{\xi}_{-}(t)-q(\xi_{-}(t),u(\xi_{-}(t)+,t),c)]dt \\
		&+\int_{a}^{b}\eta(u(\xi_{+}(t)-,t),c)[\dot{\xi}_{+}(t)-q(\xi_{+}(t),u(\xi_{+}(t)-,t),c)]dt.
	\end{split}
	\]
	Since $q$ is strictly monotone in $c$, we have that
	\[
	\dot{\xi}_{-}(t)-q(\xi_{-}(t),u(\xi_{-}(t)+,t),c)\geq\dfrac{\alpha}{3}(u_{-}-c),
	\]
	and therefore
	\begin{equation}\label{mono}
		\begin{split}
			-[\dot{\xi}_{-}(t)-q(\xi_{-}(t),u(\xi_{-}(t)+,t),c)]&\leq\dfrac{\alpha}{3}(c-u_{-})<0, \\
			\dot{\xi}_{+}(t)-q(\xi_{+}(t),u(\xi_{+}(t)-,t),c)&\leq\dfrac{\alpha}{3}(u_{+}-c)<0.
		\end{split}
	\end{equation}
	Note that $\eta\geq0$, hence the signs are well-behaved. However, $\eta$ can be zero or arbitrarily close to zero. Hence, to prove the lemma, we concentrate on the set $S_{a,b}\subseteq[a,b]$ as defined above. By \eqref{mono} and the non-negativity of $\eta$, we have that
	\begin{equation}\label{Sabo}
		\begin{split}
			&\int_{\xi_{-}(b)}^{\xi_{+}(b)}\eta(u(x,b),c)dx-\int_{\xi_{-}(a)}^{\xi_{+}(a)}\eta(u(x,a),c)dx \\
			\leq&-\int_{S_{a,b}}\eta(u(\xi_{-}(t)+,t),c)[\dot{\xi}_{-}(t)-q(\xi_{-}(t),u(\xi_{-}(t)+,t),c)]dt \\
			&+\int_{S_{a,b}}\eta(u(\xi_{+}(t)-,t),c)[\dot{\xi}_{+}(t)-q(\xi_{+}(t),u(\xi_{+}(t)-,t),c)]dt.
		\end{split}
	\end{equation}
	For $t\in S_{a,b}$ and by \eqref{final step} we have the following inequalities:
	\[
	q(\xi_{+}(t),u(\xi_{+}(t)-,t),u_{+})\geq\dot{\xi}_{+}(t)\geq\dot{\xi}_{-}(t)\geq q(\xi_{-}(t),u(\xi_{-}(t)+,t),u_{-}).
	\]
	Now note that by the mean value theorem and \ref{UC},
	\[
	q(\xi_{+}(t),u(\xi_{+}(t)-,t),u_{-})-q(\xi_{+}(t),u(\xi_{+}(t)-,t),u_{+})\geq\dfrac{\alpha}{3}(u_{-}-u_{+}).
	\]
    We now exploit the homogeneity of $f$ enforced by our assumption~\ref{P}, as recorded in Remark~\ref{band remark}:
	\begin{equation}\label{band flat}
		f_{x}(x,u)=0\quad\text{for all }x\in\mathbb{R},\ u\in[u_{+},u_{-}],
	\end{equation}
    since for each fixed $x$ the function $u\mapsto f_{x}(x,u)$ is non-decreasing by \ref{P} and vanishes at $u=u_{\pm}$ by \ref{S}. Hence, differentiating \eqref{band flat} with respect to $u$ and by \eqref{C2}, we have that $f_{xu}(\cdot,u)=0$ whenever $u\in[u_{+},u_{-}]$. Now by \ref{P} again, we have that for $u\in[u_{+},u_{-}]$:
	\[
	\pdv{q}{x}(x,u,c)=\int_{0}^{1}2zf_{xu}(x,c+z(u-c))dz\geq0.
	\]
    Hence, we conclude that $q(x,u,c)$ is independent of $x$ whenever $u,c\in[u_{+},u_{-}]$. Thus, the quantity
	\begin{equation}\label{M}
	    M=\sup\left\{\pdv{q}{u}(x,u,c):x\in\mathbb{R},\ u,c\in[u_{+},u_{-}]\right\}
	\end{equation}
	is the supremum of a continuous function of $(u,c)$ alone over the compact set $[u_{+},u_{-}]^{2}$, hence finite. Note that $M$ depends only on $f$ and $u_{\pm}$, and that $M\geq2\alpha/3>0$ by \eqref{c growth}. In order to obtain a uniform rate of dissipation for the quantity of interest, we claim that for every $c\in(u_{+},u_{-})$ and $t\in S_{a,b}$:
	\begin{equation}\label{eta lower}
		\max\{\eta(u(\xi_{-}(t)+,t),c),\eta(u(\xi_{+}(t)-,t),c)\}\geq\beta(c),
	\end{equation}
    where
    \[
    \beta(c)=\min\left\{(u_{-}-c)^{2},(c-u_{+})^{2},\dfrac{\alpha^{2}}{36M^{2}}(u_{-}-u_{+})^{2}\right\}>0.
    \]
    To see why this is true, consider its disjunction into three exhaustive cases. For simplicity of notation, we denote
    \[
	v=u(\xi_{-}(t)+,t),\qquad w=u(\xi_{+}(t)-,t)
	\]
    for $t\in S_{a,b}$. Since our claim is that bound is independent of time (conditional on $t\in S_{a,b}$), dropping $t$ from the notation does not lead to any loss of generality since our argument holds for any arbitrary $t\in S_{a,b}$.

    \textit{Case 1: $u(\xi_{-}(t)+,t)\notin[u_{+},u_{-}]$.} Then either $u(\xi_{-}(t)+,t)>u_{-}>c$ or $u(\xi_{-}(t)+,t)<u_{+}<c$, so that
	\[
	\eta(u(\xi_{-}(t)+,t),c)=(u(\xi_{-}(t)+,t)-c)^{2}\geq\min\{(u_{-}-c)^{2},(c-u_{+})^{2}\}\geq\beta(c).
	\]

	\textit{Case 2: $u(\xi_{+}(t)-,t)\notin[u_{+},u_{-}]$.} Exactly as in Case 1, we have that $\eta(u(\xi_{+}(t)-,t),c)\geq\min\{(u_{-}-c)^{2},(c-u_{+})^{2}\}\geq\beta(c)$.

    \textit{Case 3: $u(\xi_{-}(t)+,t),u(\xi_{+}(t)-,t)\in[u_{+},u_{-}]$.} Since $u(\xi_{-}(t)+,t),u_{-}\in[u_{+},u_{-}]$, we have that:
	\[
	q(\xi_{-}(t),u(\xi_{-}(t)+,t),u_{-})=q(\xi_{+}(t),u(\xi_{-}(t)+,t),u_{-}).
	\]
	Hence, we have that:
	\begin{equation}
		\begin{split}
			0<\dfrac{\alpha}{3}(u_{-}-u_{+})&\leq q(\xi_{+}(t),u(\xi_{+}(t)-,t),u_{-})-q(\xi_{-}(t),u(\xi_{-}(t)+,t),u_{-}) \\
			&=q(\xi_{-}(t),u(\xi_{+}(t)-,t),u_{-})-q(\xi_{-}(t),u(\xi_{-}(t)+,t),u_{-}) \\
			&=\pdv{q}{u}(\xi_{-}(t),\tilde{u},u_{-})(u(\xi_{+}(t)-,t)-u(\xi_{-}(t)+,t)) \\
			&\leq M(u(\xi_{+}(t)-,t)-u(\xi_{-}(t)+,t)),
		\end{split}
	\end{equation}
	where $\tilde{u}$ lies in $(u(\xi_{+}(t)-,t),u(\xi_{-}(t)+,t))$ and $M$ is defined as in~\eqref{M}. Note that $q_u$ is also a priori known to be positive, so the inequality is valid. Hence, for $t\in S_{a,b}$, we have the explicit lower bound
	\[
	u(\xi_{+}(t)-,t)-u(\xi_{-}(t)+,t)\geq\dfrac{\alpha}{3M}(u_{-}-u_{+}).
	\]
	Thus, for any $c\in(u_{+},u_{-})$ and $t\in S_{a,b}$, we must have that either
	\[
	\eta(u(\xi_{-}(t)+,t),c)\geq\dfrac{\alpha^2}{36M^2}(u_{-}-u_{+})^2\geq\beta(c),
	\]
	or
	\[
	\eta(u(\xi_{+}(t)-,t),c)\geq\dfrac{\alpha^2}{36M^2}(u_{-}-u_{+})^2\geq\beta(c).
	\]
    Finally, both terms in the integrand on the right-hand side of \eqref{Sabo} are non-positive by \eqref{mono} and the non-negativity of $\eta$. Hence, retaining for each $t\in S_{a,b}$ only the term corresponding to the larger of $\eta(v,c),\eta(w,c)$, we get from \eqref{mono} and \eqref{eta lower} that
	\[
	\begin{split}
		&\int_{\xi_{-}(b)}^{\xi_{+}(b)}\eta(u(x,b),c)dx-\int_{\xi_{-}(a)}^{\xi_{+}(a)}\eta(u(x,a),c)dx \\
		\leq&-\dfrac{\alpha}{3}\min{\{u_{-}-c,c-u_{+}\}}\beta(c)\cdot\abs{S_{a,b}},
	\end{split}
	\]
	which proves the lemma with
    \[
    \lambda=\frac{\alpha}{3}\min{\{u_{-}-c,c-u_{+}\}}\beta(c)>0,
    \]
    as required.
\end{proof}
\begin{lemma}\label{shrinkage}
	Let $\xi_{\pm}$ be solutions to the differential inclusion(s) \eqref{diffclusion} on the interval $[t_0,t_1]\subset[0,\infty)$ such that $\forall t\in[t_0,t_1]:\xi_{+}(t)-\xi_{-}(t)\geq\delta>0$, with $\xi_{+}(t_0)-\xi_{-}(t_0)=\delta$. Then, there exists $\kappa\geq1$, independent of $\delta$, such that for all $t\in[t_0,t_1]$:
	\[
	\xi_{+}(t)-\xi_{-}(t)\leq\kappa\delta.
	\]
\end{lemma}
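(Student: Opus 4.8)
The plan is to deduce this from the entropy-dissipation estimate of Lemma~\ref{lambda}, combined with the uniform Lipschitz bound enjoyed by any solution of \eqref{diffclusion}. Write $g(t)=\xi_{+}(t)-\xi_{-}(t)$ for the gap, so that $g(t_0)=\delta$ and $g\geq\delta$ throughout $[t_0,t_1]$ by hypothesis. The gap can only increase on the set $S:=S_{t_0,t_1}=\{t:\dot{\xi}_{+}(t)\geq\dot{\xi}_{-}(t)\}$, where $\dot{g}\geq0$; off $S$ we have $\dot{g}<0$. Hence the strategy is to show that $\abs{S}$ is small---of order $\delta$---and then convert this measure bound into a pointwise bound on $g$ using the boundedness of $\dot{g}$.

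First I would fix any $c\in(u_{+},u_{-})$, say $c=(u_{+}+u_{-})/2$, and apply Lemma~\ref{lambda} on the full interval $[a,b]=[t_0,t_1]$; this is legitimate since the standing hypothesis $g\geq\delta>0$ is exactly what that lemma requires. Using $\eta\geq0$ to discard the terminal integral at $t=t_1$ and rearranging gives
\[
\lambda\abs{S}\leq\int_{\xi_{-}(t_0)}^{\xi_{+}(t_0)}\eta(u(x,t_0),c)\,dx.
\]
Since $u\in L^{\infty}$ with $\supnorm{u}$ controlled (and $c\in[u_{+},u_{-}]$), the integrand $\eta(u,c)=(u-c)^2$ is bounded by a constant $C$ depending only on $\supnorm{u_0}$ and $u_{\pm}$, while the interval of integration has length exactly $g(t_0)=\delta$. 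Therefore $\lambda\abs{S}\leq C\delta$, that is $\abs{S}\leq(C/\lambda)\,\delta$, and crucially $\lambda$ is independent of $\delta$ by Lemma~\ref{lambda}.

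Next I would invoke the fact that $\xi_{\pm}$, being solutions of \eqref{diffclusion}, are Lipschitz with a constant $L_0=\sup\abs{q}$ taken over the relevant compact set (again depending only on $\supnorm{u_0}$, $f$, and $u_{\pm}$, not on $\delta$); hence $\dot{g}\leq2L_0$ almost everywhere. For any $t\in[t_0,t_1]$, writing $g(t)=\delta+\int_{t_0}^{t}\dot{g}(s)\,ds$ and using that $\dot{g}$ is negative off $S$, I would estimate
\[
g(t)\leq\delta+\int_{[t_0,t]\cap S}\dot{g}(s)\,ds\leq\delta+2L_0\abs{S}\leq\left(1+\tfrac{2L_0C}{\lambda}\right)\delta.
\]
Setting $\kappa=1+2L_0C/\lambda\geq1$ completes the argument, with $\kappa$ depending only on $f$, $\supnorm{u_0}$, and $u_{\pm}$. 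The only point requiring care---rather than genuine difficulty, since Lemma~\ref{lambda} carries the analytic weight---is the bookkeeping that makes the bound on $\abs{S}$ scale linearly in $\delta$: this hinges on the initial integral being taken over a window of length exactly $\delta$ together with the $\delta$-independence of $\lambda$, which is precisely what forces the resulting $\kappa$ to be a pure multiplicative constant.
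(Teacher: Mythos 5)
Your proof is correct and follows essentially the same route as the paper's: apply Lemma~\ref{lambda} together with the fact that the initial window has length exactly $\delta$ to get $\abs{S}\leq(C/\lambda)\delta$, then convert this measure bound into a pointwise bound on the gap via the fundamental theorem of calculus, integrating $\dot{g}$ only over $S$ and using the uniform Lipschitz bound on $\xi_{\pm}$. The only difference is cosmetic: you keep the bound on $\dot{g}$ (your $2L_0$) and the bound on $\eta$ (your $C$) as separate constants, giving $\kappa=1+2L_0C/\lambda$, whereas the paper denotes both by the single letter $N$ and obtains $\kappa=1+N^2/\lambda$; your bookkeeping is in fact the cleaner of the two.
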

\begin{proof}
	Let $\theta(t)=\xi_{+}(t)-\xi_{-}(t)$. Note that the curves $\xi_{\pm}$ solve the differential inclusion \eqref{diffclusion} and are thus uniformly Lipschitz with $\abs{\dot{\xi}_{\pm}}\leq\sup q(x,\tilde{u},c)$, where the supremum is taken over $x\in\mathbb{R},\abs{\tilde{u}}\leq\supnorm{u},c\in(u_{+},u_{-})$; note that
	\[
	L=\sup\left\{\abs{f_u(x,v)}:x\in\mathbb{R},\abs{v}\leq\max\left\{\supnorm{u},\abs{u_{+}},\abs{u_{-}}\right\}\right\}
	\]
	is finite by~\ref{FSP}; further note that by \eqref{monotono q} this functions as a bound on $q$ as well, and $\abs{\dot{\theta}}\leq2L$. By the fundamental theorem of calculus, we have that for $t\in[t_0,t_1]$:
	\begin{equation}\label{semicontraction step}
		\begin{split}
			\xi_{+}(t)-\xi_{-}(t)&=\delta+\int_{t_0}^{t}\dot{\theta}(s)ds \\
			&\leq\delta+\int_{S_{t_0,t}}\dot{\theta}(s)ds \\
			&\leq\delta+2L\abs{S_{t_0,t}}.
		\end{split}
	\end{equation}
	Now, let $N$ be an upper bound for $\eta$ over the compact set $[-\supnorm{u},\supnorm{u}]\times[u_{+},u_{-}]$.
	By Lemma \ref{lambda}, we have that for $c\in(u_{+},u_{-})$:
	\[
	\begin{split}
		0&\leq\int_{\xi_{-}(t)}^{\xi_{+}(t)}\eta(u(x,t),c)dx \\
		&\leq-\lambda\abs{S_{t_0,t}}+\int_{\xi_{-}(t_0)}^{\xi_{+}(t_0)}\eta(u(x,t_0),c)dx \\
		&\leq-\lambda\abs{S_{t_0,t}}+N\delta,
	\end{split}
	\]
	and therefore,
	\[
	\abs{S_{t_0,t}}\leq\dfrac{N\delta}{\lambda}.
	\]
	Hence, we can rewrite the inequality \eqref{semicontraction step} as
	\[
	\xi_{+}(t)-\xi_{-}(t)\leq\left(1+\dfrac{2LN}{\lambda}\right)\delta,
	\]
	which completes the proof.
\end{proof}
Finally, let us state and prove the lemma that will complete the proof of our theorem.
\begin{lemma}
	Let $\xi_{\pm}$ be solutions to the differential inclusion(s) \eqref{final step} with $\xi_{\pm}(0)=0$. Then, $\forall t\geq0:\xi_{+}(t)\leq\xi_{-}(t)$.
\end{lemma}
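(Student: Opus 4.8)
The plan is to argue by contradiction, extracting a quantitative collapse of the gap from the scale-invariant growth bound of Lemma~\ref{shrinkage}. Set $\theta(t)=\xi_{+}(t)-\xi_{-}(t)$. Since $\xi_{\pm}$ are Lipschitz, $\theta$ is continuous, and by hypothesis $\theta(0)=0$. Suppose, for contradiction, that $\theta(t_1)>0$ for some $t_1>0$. The whole argument will hinge on the fact, emphasised in Lemma~\ref{shrinkage}, that the constant $\kappa$ is \emph{independent of} $\delta$; this is precisely what permits the limiting step below, and it traces back to the $\delta$-independence of $\lambda$ in Lemma~\ref{lambda}.

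First I would isolate, for each small $\delta$, the last instant at which the gap equals $\delta$. Concretely, for each $\delta\in(0,\theta(t_1))$ define
\[
t_{\delta}=\sup\{t\in[0,t_1]:\theta(t)\leq\delta\}.
\]
This set is non-empty (it contains $0$, as $\theta(0)=0\leq\delta$) and closed, being the preimage of $(-\infty,\delta]$ under the continuous map $\theta$ intersected with the compact interval $[0,t_1]$; hence the supremum is attained. Since $\theta(t_1)>\delta$ we have $t_{\delta}<t_1$, and by continuity $\theta(t_{\delta})=\delta$, while $\theta(t)>\delta$ for every $t\in(t_{\delta},t_1]$. Thus $\theta\geq\delta$ on $[t_{\delta},t_1]$ with $\theta(t_{\delta})=\delta$, which is exactly the hypothesis of Lemma~\ref{shrinkage} on the interval $[t_{\delta},t_1]$ (recalling that $\xi_{\pm}$ solve \eqref{final step}, a special case of the inclusion \eqref{diffclusion}).

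Applying Lemma~\ref{shrinkage} on $[t_{\delta},t_1]$ then yields $\theta(t)\leq\kappa\delta$ for all $t\in[t_{\delta},t_1]$, and in particular $\theta(t_1)\leq\kappa\delta$. Because $\kappa\geq1$ does not depend on $\delta$, letting $\delta\to0^{+}$ forces $\theta(t_1)\leq0$, contradicting $\theta(t_1)>0$. Hence $\theta(t)\leq0$, i.e.\ $\xi_{+}(t)\leq\xi_{-}(t)$, for all $t\geq0$, which completes the proof and, via the reductions preceding \eqref{final step}, establishes Theorem~\ref{L2 stability}.

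The heavy lifting has already been done in Lemmas~\ref{lambda} and~\ref{shrinkage}, so the only genuinely delicate point here is the bookkeeping that makes the limit legitimate: $t_{\delta}$ must be chosen as the \emph{last} time the gap drops to $\delta$, not the first, so that $\theta$ truly stays above $\delta$ on all of $[t_{\delta},t_1]$ and Lemma~\ref{shrinkage} genuinely applies. Should one wish to bypass Lemma~\ref{shrinkage}, a direct route uses Lemma~\ref{lambda} on $[t_0+\epsilon,t_1]$, where $t_0=\sup\{t\leq t_1:\theta(t)=0\}$ is the last zero of $\theta$: fixing $c\in(u_{+},u_{-})$, the area $\int_{\xi_{-}(t)}^{\xi_{+}(t)}\eta(u(x,t),c)\,dx$ is non-negative and, being bounded by a constant multiple of $\theta(t)$, tends to $0$ as $t\to t_0^{+}$, so letting $\epsilon\to0$ forces $\abs{S_{t_0,t_1}}=0$; then $\dot{\theta}<0$ almost everywhere on $(t_0,t_1)$, whence $\theta(t_1)<\theta(t_0)=0$, again a contradiction.
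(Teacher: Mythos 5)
Your proof is correct and follows essentially the same route as the paper's: both argue by contradiction, pick $t_{\delta}$ as the \emph{last} time the gap equals $\delta$ (your sublevel-set supremum and the paper's $\sup d^{-1}(\delta)$ identify the same point), apply Lemma~\ref{shrinkage} on $[t_{\delta},t_1]$, and use the $\delta$-independence of $\kappa$ to let $\delta\to0^{+}$. Your closing sketch of a direct argument via Lemma~\ref{lambda} (showing $\abs{S_{t_0,t_1}}=0$ past the last zero of the gap) is a sound variant, but the main argument is the paper's own.
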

\begin{proof}
	We will employ proof by contradiction. Suppose $\xi_{+}(T)-\xi_{-}(T)=G>0$, and define $d(t)=\xi_{+}(t)-\xi_{-}(t)$ for $t\in[0,T]$, so that $d$ is Lipschitz continuous and $d(0)=0,d(T)=G>0$. Let $0<\delta<G$ and consider the set $d^{-1}(\delta)\subset[0,T]$; by the intermediate value theorem this set is non-empty, by the continuity of $d$ it is closed, and therefore $t_{\delta}=\sup d^{-1}(\delta)$ is well defined and $t_{\delta}\in d^{-1}(\delta)$. Now, $d(t)>\delta$ for $t\in(t_{\delta},T]$ and $d(t_{\delta})=\delta$. Hence, we can apply Lemma \ref{shrinkage} to $\xi_{+},\xi_{-}:[t_{\delta},T]\to\mathbb{R}$, and in particular we have that
	\[G=d(T)\leq\kappa\delta.\]
	However, $\delta>0$ can be chosen arbitrarily small, and $\kappa$ is independent of our choice of $\delta$. Hence, we get that $\forall\delta>0:G\leq\kappa\delta\implies G\leq0$, which contradicts our assumption that $G>0$. Hence, it follows that $\xi_{+}(t)\leq\xi_{-}(t)$ for all $t$.
\end{proof}
As per the earlier remarks, this concludes the proof of Theorem \ref{L2 stability}. In particular, for any curve $\tilde{\xi}(t)$ lying between $\xi_{+}(t),\xi_{-}(t)$, we have that
\[\forall t\geq0:\pnorm{2}{u(\cdot,t)-\Phi(\cdot-\tilde{\xi}(t))}\leq\pnorm{2}{u_0-\Phi}.\]
Therefore, it suffices to consider e.g. $\overline{\xi}(t)=\frac{1}{2}\xi_{-}(t)+\frac{1}{2}\xi_{+}(t)-\sigma t$. Any continuous curve $\xi(t)$ such that $\xi(t)\in[\xi_{+}(t)-\sigma t,\xi_{-}(t)-\sigma t]$ for all $t\geq0$ will do the job, even if it is not Lipschitz. However, we can always make a choice of $\overline{\xi}$ that is Lipschitz.

\subsection{Magnitude of the shift}
In this section, we prove Corollary~\ref{magnitude} on the magnitude of the shift required to induce $L^2$-contraction. Here again we follow \cite{LegerShock} with allowances for heterogeneity. Let $\overline{\xi}(t)$ be as above. Now, the solutions to \eqref{final step} $\xi_{\pm}$ are uniformly Lipschitz with $\abs{\dot{\xi}_{\pm}}\leq L$, where $L$ depends only on $\supnorm{u_0},f,u_{\pm}$. We also know that $\xi_{\pm}(0)=0$, and $\xi_{+}(t)\leq\overline{\xi}(t)+\sigma t\leq\xi_{-}(t)$ which implies that
\[
\abs{\overline{\xi}(t)+\sigma t}\leq\max\left\{\abs{\xi_{+}(t)},\abs{\xi_{-}(t)}\right\}\leq Lt,
\]
and therefore
\[
\abs{\overline{\xi}(t)}\leq\left(L+\abs{\sigma}\right)t.
\]
Furthermore, we have that $\Phi(\cdot-\sigma t)-\Phi(\cdot-\sigma t-\overline{\xi}(t))$ is supported in
\[
\left[\min\{\sigma t,\sigma t+\overline{\xi}(t)\},\max\{\sigma t,\sigma t+\overline{\xi}(t)\}\right],
\]
which is contained in $\left[-\left(L+\abs{\sigma}\right)t,\left(L+\abs{\sigma}\right)t\right]$, since $\abs{\sigma t}\leq\abs{\sigma}t$ and $\abs{\sigma t+\overline{\xi}(t)}\leq Lt$. Now let
\[
V=\sup\left\{\abs{f_u(x,z)}:x\in\mathbb{R},\ \abs{z}\leq R\right\},\qquad R=\max\left\{\supnorm{u},\abs{u_{-}},\abs{u_{+}}\right\},
\]
which is well defined by \ref{FSP}, and bounds the relevant speeds of propagation for both $u$ and the (shifted) Riemann solutions. Therefore, we can say that
\[
\begin{split}
	(u_{-}-u_{+})\abs{\overline{\xi}(t)}=&\pnorm{L^1\left(-\left(L+\abs{\sigma}\right)t,\left(L+\abs{\sigma}\right)t\right)}{\Phi(\cdot-\sigma t)-\Phi(\cdot-\sigma t-\overline{\xi}(t))} \\
	\leq&\pnorm{L^1\left(-\left(L+\abs{\sigma}\right)t,\left(L+\abs{\sigma}\right)t\right)}{\Phi(\cdot-\sigma t)-u(\cdot,t)} \\ &+\pnorm{L^1\left(-\left(L+\abs{\sigma}\right)t,\left(L+\abs{\sigma}\right)t\right)}{u(\cdot,t)-\Phi(\cdot-\sigma t-\overline{\xi}(t))} \\
	\leq&\pnorm{L^1\left(-\left(L+\abs{\sigma}+V\right)t,\left(L+\abs{\sigma}+V\right)t\right)}{\Phi-u_0} \\
	&+\pnorm{L^1\left(-\left(L+\abs{\sigma}\right)t,\left(L+\abs{\sigma}\right)t\right)}{u(\cdot,t)-\Phi(\cdot-\sigma t-\overline{\xi}(t))},
\end{split}
\]
where the last inequality follows from Kružkov's local $L^1$-contraction/finite-propagation estimate for entropy solutions of scalar conservation laws \cite{Kruzkov}, with propagation speed $V$. Since $\Phi,u\in L^{\infty}$, we can apply Hölder's inequality to both terms on the right-hand side above and from Theorem \ref{L2 stability} obtain
\[
\begin{split}
	(u_{-}-u_{+})\abs{\overline{\xi}(t)}\leq&\sqrt{2\left(L+\abs{\sigma}+V\right)t}\pnorm{2}{\Phi-u_0} \\
	&+\sqrt{2\left(L+\abs{\sigma}\right)t}\pnorm{2}{u(\cdot,t)-\Phi(\cdot-\sigma t-\overline{\xi}(t))} \\
	\leq&\left(\sqrt{2\left(L+\abs{\sigma}\right)}+\sqrt{2\left(L+\abs{\sigma}+V\right)}\right)\pnorm{2}{u_0-\Phi}\sqrt{t},
\end{split}
\]
which proves the corollary.  Note that the shift curve $\overline{\xi}$ can be chosen such that it is Lipschitz, though it is not evident from this inequality. The only condition that the continuous curve $\overline{\xi}(t)$ must satisfy to induce $L^2$ contraction is that it must lie between the curves $\xi_{\pm}(t)-\sigma t$. If $\xi_{+}(t)\neq\xi_{-}(t)$, then in particular we can choose continuous curves that are not Lipschitz, but still induce $L^2$ contraction.

The constants $L,V$ are controlled by $\supnorm{u_0},f,u_{\pm}$ in the following way. By the existence of backward characteristics and the preservation of $f$ along genuine characteristics, we have that for all $t\geq0:\supnorm{f(\cdot,u(\cdot,t))}\leq\supnorm{f(\cdot,u_0(\cdot))}$. By \eqref{S} and \eqref{FSP}, $\supnorm{f(\cdot,u_0(\cdot))}$ is well-defined for $u_0\in L^{\infty}$. Now, by \eqref{Nag}, the uniform bound on $f(\cdot,u(\cdot,t))$ implies a uniform bound on $u(\cdot,t)$, for all $t>0$. Thus $L,V$ are both controlled by $\supnorm{u_0},f,u_{\pm}$ as claimed.

\subsection{Counterexample with negative heterogeneity}\label{neg het}
In this section, we justify the assumption \eqref{P} in Theorem~\ref{L2 stability}. In particular, we adapt the counterexample of section~\ref{ex} as follows: let $\varphi(x)$ now be a \textit{decreasing} rather than an increasing function, and let $u_{-}$ be the largest value of $u$ such that $h(u)=g(u)$. The function
\[
\eta(u)=\dfrac{(u-u_-)^2}{2}
\]
is a convex entropy, with the corresponding flux given by
\begin{align*}
	Q(x,u)&=\int_{u_-}^{u}(y-u_{-})\left[\varphi(x)h^{\prime}(y)+(1-\varphi(x))g^{\prime}(y)\right]dy \\
	\implies Q_x(x,u)&=\varphi^{\prime}(x)\int_{u_-}^{u}(y-u_{-})\left[h^{\prime}(y)-g^{\prime}(y)\right]dy.
\end{align*}
Let $\psi\in C_c^{\infty}(\mathbb{R})$ be a non-zero positive function. Then, for initial data $u_0(x)=u_{-}+\psi(x)$, a classical solution exists for at least some finite time, say $T$. In particular, the classical solution $u$ satisfies the entropy inequality as an \textit{equality}, i.e. in $\mathbb{R}\times[0,T],$ we have that
\begin{align*}
	\eta(u)_t+Q(x,u)_x&=Q_x(x,u)-\eta^{\prime}(u)f_x(x,u) \\
	&=\varphi^{\prime}(x)\left[\int_{u_{-}}^{u}(y-u_{-})[h^{\prime}(y)-g^{\prime}(y)]dy-(u-u_{-})\left[h(u)-g(u)\right]\right] \\
	&=-\varphi^{\prime}(x)\int_{u_-}^{u}\left[h(y)-g(y)\right]dy \\
	&\geq0.
\end{align*}
Indeed, by our choice of $u_{-}$, the inequality is strict wherever $u(x,t)\neq u_-$. Hence, integrating by parts over the domain $\mathbb{R}\times[0,T]$ yields
\[
\int_{\mathbb{R}}\dfrac{(u(x,T)-u_-)^2}{2}dx>\int_{\mathbb{R}}\dfrac{(u_0(x)-u_{-})^2}{2}dx,
\]
and thus the $L^2$ norm strictly increases. Note that in the homogeneous case, this is not possible. The non-divergence term in the entropy equation can be written for general $C^2$ fluxes with a stationary point $u_-$ as
\begin{align*}
	Q_x(x,u)-\eta^{\prime}(u)f_x(x,u)&=\int_{u_-}^{u}(y-u_{-})f_{xu}(x,y)dy-(u-u_-)f_x(x,u) \\
	&=-\int_{u_-}^{u}f_x(x,y)dy.
\end{align*}
Hence, if $f_{xu}(x_0,u_{-})<0$ for any $x_0\in\mathbb{R}$, we can always construct smooth initial data such that the $L^2$ distance of the entropy solution from the constant $u_-$ is strictly increasing for small enough times. Since $f\in C^2(\mathbb{R}^2)$ and $f_x(\cdot,u_-)=0$ by \eqref{S}, $f_{x}(x,u)<0$ for $u>u_{-}$  and $\abs{x-x_0},\abs{u-u_-}$ small enough. This also holds, mutatis mutandis, with any other stationary point $\overline{u}$ of the flux. Hence, by \eqref{FSP}, the simple shock solution of \eqref{claw} with initial data of the Riemann form \eqref{Riemann} is not contractive with respect to $L^2$ perturbations, even with shifts, if \eqref{P} is violated at $u_{\pm}$.

\section{Appendix: well-posedness}\label{app}
Since our assumptions \eqref{S}-\eqref{FSP} on the flux are distinct from those of Kružkov \cite{Kruzkov}, well-posedness of the conservation law \eqref{claw} is a subtle matter, though the same results (existence, uniqueness, and stability of solutions) hold. We note, however, that under the additional assumption \eqref{P}, the well-posedness results of Kružkov \cite{Kruzkov} do apply, where only the weaker assumption that $f_{xu}$ be bounded below is required.

It is sufficient to show that viscosity solutions to the Hamilton-Jacobi equation \eqref{HJ} exist for Lipschitz initial data, and that the solution is in turn uniformly Lipschitz, unique, and in particular obtained from the variational formulation. To prove this, we will work with the variational form of \eqref{HJ}, since it is known that viscosity solutions of \eqref{HJ} are obtained from the variational form for convex Hamiltonians \cite[Chapter 6]{CSHJB}.

\subsection{Variational formulation}
The Hamilton-Jacobi equations \eqref{HJ} can be equivalently analysed in variational form under the assumption of strict convexity via the Legendre transform, given by
\[
f^{\ast}(x,v)=\sup_{u\in\mathbb{R}}\{uv-f(x,u)\}.
\]
This is well-defined when $f$ is assumed to have super-linear growth, i.e. for all $x$:
\[
\lim_{\abs{u}\to\infty}\dfrac{f(x,u)}{\abs{u}}=+\infty,
\]
which is not a restrictive assumption when a priori $L^{\infty}$ estimates for solutions to the conservation law can be established. Here the role of assumption \eqref{Nag} is also made clear: it ensures that this limit is well-defined and holds, and thus that the supremum in the definition of the Legendre transform is a maximum over a compact set. For each $(x,t)\in\mathbb{R}\times[0,\infty)$, let $\mathscr{A}_{x,t}$ denote the set of admissible arcs given by
\[
\mathscr{A}_{x,t}=\{y\in W^{1,1}([0,t];\mathbb{R}):y(t)=x\},
\]
where $W^{1,1}$ denotes the standard Sobolev space. Then, a unique notion of solution to \eqref{HJ} with Lipschitz initial data can be obtained as the value function of the following cost minimisation problem \cite[pp. 161-165]{CSHJB}:
\[
v(x,t)=\inf_{y\in\mathscr{A}_{x,t}}\left\{v_0(y(0))+\int_{0}^{t}f^{\ast}(y(s),\dot{y}(s))ds\right\}.
\]
The infimum above can be shown to be a minimum that is achieved for specific (not necessarily unique) paths. It can also be shown that $v$ is Lipschitz and semiconcave, and that this semiconcavity is sufficient to characterise a unique notion of solution \cite[pp. 112-121]{CSHJB}. Therefore, $v$ satisfies \eqref{HJ} in a point-wise sense almost everywhere, and we can meaningfully speak of left and right spatial derivatives at all positive times, which in turn means we can make sense of traces for solutions of \eqref{claw}. Thus, we consider the following calculus of variations problem, for $v_0\in\operatorname{Lip}(\mathbb{R})$:
\begin{align}\label{CV}\tag{CV}
	v(x,t)=\inf_{y\in\mathscr{A}_{x,t}}\left\{v_0(y(0))+\int_{0}^{t}f^{\ast}(y(s),\dot{y}(s))ds\right\}
\end{align}
where $f^*$ denotes the Legendre transform, and $\mathcal{A}_{x,t}$ is, as defined before,
\[
\mathscr{A}_{x,t}=\{y\in W^{1,1}([0,t];\mathbb{R}):y(t)=x\},
\]
denoting the set of (absolutely continuous) admissible paths. We will demonstrate that minimising arcs exist, adapting the proof of \cite[Theorem 6.1.2]{CSHJB}, but relaxing the assumption that $v_0$ be bounded below; we only assume that it is uniformly Lipschitz. First we show that $f^{*}$ also satisfies a Nagumo-type superlinear growth condition. Let $u>u_{-}$, then
\begin{align*}
	f(x,u)&=f(u_-)+\int_{u_-}^uf_u(x,y)dy \\
	&\leq f(u_-)+\int_{u_-}^{u}\theta(y)dy.
\end{align*}
For $u>u_{-}$, let
\[
A(u)=f(u_-)+\int_{u_-}^{u}\theta(y)dy.
\]
Then $A$ has superlinear growth as $u\to+\infty$: by \ref{UC} and \ref{FSP}, for every $x$,
\[
A(u)\geq f(x,u)\geq f(u_{-})-\theta(u_{-})(u-u_{-})+\dfrac{\alpha}{2}(u-u_{-})^2,
\]
and the right-hand side is a fixed quadratic in $u$, independent of $x$. Similarly, we can define $A$ for $u<u_{+}$ by
\[
A(u)=f(u_+)+\int_{u}^{u_+}\theta(y)dy,
\]
so that
\begin{align*}
	f(x,u)&=f(u_+)+\int_{u_+}^{u}f_u(x,y)dy \\
	&\leq f(u_+)+\int_{u}^{u_+}\theta(y)dy \\
	&\leq A(u),\quad u<u_+.
\end{align*}
As before, $A(u)\geq f(x,u)\geq f(u_{+})-\theta(u_{+})(u_+-u)+\frac{\alpha}{2}(u-u_{+})^2$, so $A$ has superlinear growth as $u\to-\infty$ as well. For $u\in[u_{+},u_{-}]$, define $A(u)=f(u_{+})+\sigma(u-u_{+})$, where $\sigma$ is the Rankine-Hugoniot speed in \eqref{RH}. Hence, $f(x,u)\leq A(u)$, and $A$ has superlinear growth at $\pm\infty$. Thus, its Legendre transform $A^*$ is well-defined and convex with superlinear growth \cite[Chapter 3, Theorem 3]{evans}; note that $A$ itself need not be convex for these conclusions. Hence,
\begin{align*}
	f^*(x,v)&=\sup_{p\in\mathbb{R}}\left\{pv-f(x,p)\right\} \\
	&\geq\sup_{p\in\mathbb{R}}\left\{pv-A(p)\right\} \\
	&\geq A^*(v).
\end{align*}
Let $\overline{A}=\min_{y\in\mathbb{R}}\{A^*(y),0\}$ and let $M(y)=A^*(y)-\overline{A}$ so that $M\geq0$. Then, for any admissible arc, by Jensen's inequality:
\begin{equation*}
	\begin{split}
		\int_{0}^{t}f^{\ast}(y(s),\dot{y}(s))ds&\geq\int_{0}^{t}A^*(\dot{y}(s))ds \\
		&\geq\int_{0}^{t}M(\dot{y}(s))ds+t\overline{A} \\
		&\geq tM\left(t^{-1}\int_0^t\dot{y}(s)ds\right)+t\overline{A} \\
		&=t(M(t^{-1}(x-y(0)))+\overline{A}).
	\end{split}
\end{equation*}
Hence,
\begin{align*}
	v_0(y(0))+\int_{0}^{t}f^{\ast}(y(s),\dot{y}(s))ds&\geq v_0(y(0))+t(M(t^{-1}(x-y(0)))+\overline{A}).
\end{align*}
Now, $v_0$ is Lipschitz and thus grows at most linearly, while $M$ grows super-linearly as $y(0)\to\pm\infty$. Thus, the functional is bounded below, and we can meaningfully speak of minimising sequences in $\mathcal{A}_{x,t}$. Note that this also means $v(x,t)$ is well-defined and finite. Furthermore, if $y_k$ is such a minimising sequence, then $y_k(0)$ must be bounded. We want to show that $y_k$ contains a convergent subsequence. Then, by the lower semi-continuity of the functional, we obtain the existence of explicit minimising arcs.

By the Dunford-Pettis theorem, the compactness of $\{y_k\}$ is equivalent to showing that the sequence of derivatives $\dot{y}_k$ is equiabsolutely integrable, since one endpoint, $y_k(t)=x$, is fixed. Let $\varepsilon>0,\mu>0$. By superlinear growth, there exists $C_{\mu}$ such that $r\leq M(r)/\mu$ for all $r>C_{\mu}$. Now $M\geq0$, and
\[
\int_{0}^{t}M(\dot{y}_k(s))ds\leq-t\overline{A}+\int_{0}^{t}f^*(y_k(s),\dot{y}_k(s))ds.
\]
Suppose
\[
\sup_{k\in\mathbb{N}}{\abs{y_k(0)}}=Y_0, \quad  \max_{x\in[-Y_0,Y_0]}\abs{v_0(x)}=V_0.
\]
Then, for all $k\in\mathbb{N}$, we have that
\begin{align*}
	\int_{0}^{t}M(\dot{y}_k(s))ds&\leq-t\overline{A}+\sup_{k\in\mathbb{N}}\left\{v_0(y_k(0))+\int_{0}^{t}f^{\ast}(y_k(s),\dot{y}_k(s))ds\right\}+V_0,
\end{align*}
where the supremum is well-defined by our assumption that $\{y_k\}$ is a minimising sequence for the functional \eqref{CV}. Now, for any measurable set $E\subseteq[0,t]$, we have that
\begin{equation*}
	\begin{split}
		\int_{E}\abs{\dot{y_k}(s)}ds&\leq\dfrac{1}{\mu}\int_{E\cap\{\abs{\dot{y}_k}>C_\mu\}}M\left({\dot{y}_k(s)}\right)ds+\int_{E\cap\{\abs{\dot{y}_k}\leq C_\mu\}}\abs{\dot{y}_k(s)}ds \\
		&\leq\dfrac{1}{\mu}\tilde{C}+\abs{E}C_{\mu},
	\end{split}
\end{equation*}
where $\tilde{C}$ is some constant depending on $t,v_0$ and the minimising sequence; specifically
\[
\tilde{C}=-t\overline{A}+\sup_{k\in\mathbb{N}}\left\{v_0(y_k(0))+\int_{0}^{t}f^{\ast}(y_k(s),\dot{y}_k(s))ds\right\}+V_0.
\]
Hence, by taking $\mu$ large enough and $\abs{E}$ small enough, the integral on the LHS can be made smaller than $\epsilon$, thus proving that it is equiabsolutely integrable. Hence, a limit exists, which we denote by $y$. Without loss of generality, assume $y_k\to y$. From the lower semicontinuity of the functional, we conclude that $y$ is a minimiser \cite[Theorem 6.1.2]{CSHJB}.

The rest of the argument follows exactly as in \cite[Chapter 6]{CSHJB}, since the only assumption to be relaxed was the lower bound on $v_0$, which we have done by proving that the sequence $\{y_k(0)\}$ is bounded for any minimising sequence. The value function $v$ is the unique viscosity solution of the Hamilton-Jacobi equation \eqref{HJ}, as uniqueness follows from known results \cite{crandallions}, and $u=v_x$ is the corresponding unique entropy solution of \eqref{claw}.
\subsection{Generalising compact non-homogeneity}
To see the well-posedness of \eqref{claw} and its correspondence with \eqref{HJ}, we appeal to the compact non-homogeneity theory as laid out in \cite{thesis} and generalise it using the stability theorem and \eqref{FSP} assumption. In particular, for a flux satisfying assumptions \eqref{S}-\eqref{FSP}, let us consider a sequence of approximate fluxes defined by
\[
f_{K}(x,u)=f(x,u)\varphi_K(x)+(1-\varphi_K(x))g(u),
\]
where $\varphi_K$ is a compactly supported, smooth approximation by mollification of the characteristic function of $[-K-1,K+1]$ that is identically 1 on $[-K,K]$; in particular, consider
\[
\varphi_K(x)=\eta_{\epsilon}\ast\chi_{[-K-1,K+1]}(x),
\]
where $\eta_{\epsilon}$ is a standard symmetric mollifier supported in $[-\epsilon,\epsilon]$ with parameter $\epsilon<1/2$, and $g$ is defined as
\[
g(u)=\dfrac{\alpha}{2}(u-u_-)(u-u_+)+cu+d,
\]
where $c,d$ are constants chosen such that $g(u_{\pm})=f(u_{\pm})$. Then, $f_K$ satisfies all the properties \eqref{S}-\eqref{FSP} in addition to \eqref{CNH}, since $\partial_xf_K(x,u)=0$ for $\abs{x}>K+2$. Entropy solutions to \eqref{claw} (and, respectively, viscosity solutions for $\eqref{HJ}$) exist for these `approximate' fluxes. Now, by \eqref{FSP} and \cite[Theorem 8.2.3]{thesis}, we can conclude that the `approximate' solutions $u_K,u_J$ match on domains $[-N,N]\times[0,T]$ for any $N,T$ if $K,J$ are large enough relative to $N,T$. In particular, we have the stability estimate
\[
\int_{-N}^{N}\abs{u_K(x,T)-v_K(x,T)}dx\leq\int_{-N-LT}^{N+LT}\abs{u_0(x)-v_0(x)}dx
\]
for $K>N+LT$, where $u_K,v_K$ are entropy solutions of \eqref{claw} for flux $f_K$ and initial data $u_0,v_0\in L^{\infty}(\mathbb{R})$ respectively, and $L$ is independent of $K$ by \eqref{FSP} since
\begin{align*}
	L&=\sup_{x\in\mathbb{R},\abs{p}\leq\supnorm{u}+\supnorm{v}}\abs{\pdv{f_K}{u}(x,p)} \\
	&\leq\sup_{x\in\mathbb{R},\abs{p}\leq\supnorm{u}+\supnorm{v}}\abs{\pdv{f}{u}(x,p)}+\sup_{x\in\mathbb{R},\abs{p}\leq\supnorm{u}+\supnorm{v}}\abs{\pdv{g}{u}(p)} \\
	&\leq\theta\left(\supnorm{u}+\supnorm{v}\right)+(2\alpha+c)\left(\supnorm{u}+\supnorm{v}\right).
\end{align*}
Thus, the equation \eqref{claw} is well-posed in $L^{\infty}$ and correspondingly \eqref{HJ} is well-posed in $\operatorname{Lip}(\mathbb{R})$ \cite[Corollary 8.2.19]{thesis} (see also \cite[Remark 8.2.4]{thesis}). By correspondence with \eqref{HJ}, left and right spatial traces exist for the entropy solution at all positive times. Thus, Dafermos' theory of generalised characteristics \cite{GenChar} is applicable. Finally, we note that \eqref{S} and \eqref{UC} are not essential for \eqref{claw} to be well-posed; as long as the other conditions hold, \eqref{UC} can be relaxed to `strong convexity' in the sense of \cite[Definition 8.1.1]{thesis} and \eqref{S} can be relaxed to allow $u_{+}=u_{-}$, i.e. a single stationary state \cite{correspondence}. Alternatively, it is sufficient to assume that, in addition to \eqref{FSP}, $f(\cdot,u)$ is also uniformly bounded for each $u$.

\section{Acknowledgements}
The authors would like to thank the Department of Atomic Energy, Government of India, for their support under project no. 12-R\&D-TFR-5.01-0520.

\bibliographystyle{abbrv}
\bibliography{citations}
\end{document}